\let\mathcal\mathscr
\numberwithin{equation}{section}
\newtheorem{theorem}{Theorem}[section]
\newtheorem{lemma}[theorem]{Lemma}
\newtheorem{proposition}[theorem]{Proposition}
\theoremstyle{definition}
\newtheorem{remark}[theorem]{Remark}
\newcommand{\R}{\mathbb{R}}
\renewcommand{\le}{\leqslant}
\renewcommand{\ge}{\geqslant}
\renewcommand{\Re}{\operatorname{Re}}
\renewcommand{\Im}{\operatorname{Im}}
\DeclareMathOperator{\sn}{sn}
\begin{document}

\date{\today}

\title{True value of an integral in Gradshteyn and Ryzhik's table.}
\author{Juan\ Arias de Reyna}

\address{Univ.~de Sevilla \\
Facultad de Matem\'aticas \\
c/Tarfia, sn
 \\
41012-Sevilla \\
Spain}
\email{arias@us.es}

\thanks{2010  {\em Mathematics Subject Classification.} 33E05 (26A42, 30E20, 28A99, 33E05)} 

\begin{abstract}
Victor Moll pointed out that 
entry 3.248.5 in the sixth edition of Gradshteyn and Ryzhik tables
of integrals was incorrect. He asked some years ago what was the true value of this integral.
I evaluate it in terms of two elliptic  integrals. The evaluation
is standard but involved, using real and complex analysis.
\end{abstract}

\maketitle
\thispagestyle{empty}

\setcounter{tocdepth}{1}
\tableofcontents

\section{Introduction.}

Victor Moll in several places (\cite{BM}, \cite{Borwein}*{p.~184}, \cite{AmdMoll}, \cite{moll2}, \cite{mollB1}*{p.~176--177}) tell the story of how he started the project of proving each formula in Gradshteyn and Ryzhik table of integrals. For example we quote from  \cite{mollB1}


\begin{quote}
Given the large number of entries in \cite{gr1}, we have not yet developed an order in which to check them. Once in a while an entry catches our eye. This was the case with entry 
$\mathbf{3.248.5}$ in the sixth edition of the table by Gradshteyn and Ryzhik. The presence of the double square root in the appealing integral
\begin{equation}\tag{26}\label{3.248.5}
\int_0^\infty\frac{dx}{(1+x^2)^{3/2}\bigl[\varphi(x)+\sqrt{\varphi(x)}\bigr]^{1/2}}
=\frac{\pi}{2\sqrt{6}},
\end{equation}
with
\[\varphi(x)=1+\frac{4x^2}{3(1+x^2)^2},\]
remind us of (16). \emph{Unfortunately \eqref{3.248.5} is incorrect}. The numerical value of the left-hand side is approximately $0.666377$, and the right-hand side is about $0.641275$. The table \cite{gr1} is continually being revised. After we informed the editors of the error in $\mathbf{3.248.5}$, it was taken out. There is no entry $\mathbf{3.248.5}$ in \cite{gr2}. At the present time, we are still reconciling this formula.
\end{quote}

There are two natural questions about entry $\mathbf{3.248.5}$. What is the true value 
of this integral? and, there is a variation of the integrand that integrates to $\pi/2\sqrt{6}$?
Our purpose here is to compute an exact value for this integral in terms of elliptic integrals at well defined arguments. We still have no good answer for the second question.

In all the paper we call $I$ the value of the integral. Our corrected entry is
\[\int_0^{\infty}\frac{dx}{(1+x^2)^{3/2}[\varphi(x)+\sqrt{\varphi(x)}]^{1/2}}=
\frac{\sqrt{3}-1}{\sqrt{2}}\Pi(\pi/2,k,3^{-1/2})-\frac{1}{\sqrt{2}}F(\alpha, 3^{-1/2}),
\]
where $k=2-\sqrt{3}$,  $\alpha=\arcsin\sqrt{k}$ and 
$F(\varphi,k)$ and  $\Pi(\varphi,n,k)$ are the elliptic integral of the first and 
third kind respectively. 

We have used here the notations in Gradshteyn and Ryzhik tables, so that
\[F(\alpha, 3^{-1/2})=\int_0^{\sqrt{k}}\frac{dx}{\Delta(x)},\qquad 
\Pi(\pi/2,k,3^{-1/2})=\int_0^1\frac{dx}{(1-kx^2)\Delta(x)},\]
where  
$\Delta(x)=\sqrt{(1-x^2)(1-\frac{1}{3}x^2)}$.

\section{Representation as a conditionally convergent double series.}

Recall that  $I$ is  the value of the integral. First we need to transform it a little
and then we obtain the series representation.

\begin{proposition}
We have 
\begin{equation}\label{E:1}
I=\int_0^1 \frac{dy}{
\left[1+\frac43(y^2-y^4)+\left(1+
\frac43(y^2-y^4)\right)^{1/2}\right]^{1/2}}.
\end{equation}
\end{proposition}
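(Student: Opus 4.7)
The proof will be a single substitution, chosen to simultaneously simplify the rational function $\varphi(x)$ and absorb the factor $(1+x^2)^{-3/2}$.

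The plan is to try the substitution $y=x/\sqrt{1+x^2}$, which is a natural diffeomorphism from $(0,\infty)$ onto $(0,1)$. The motivation is twofold. First, under this substitution one computes $y^2=x^2/(1+x^2)$ and $1-y^2=1/(1+x^2)$, so
\[
y^2-y^4=y^2(1-y^2)=\frac{x^2}{(1+x^2)^2},
\]
and hence $\varphi(x)=1+\tfrac{4x^2}{3(1+x^2)^2}=1+\tfrac{4}{3}(y^2-y^4)$, which is exactly the quantity appearing under the outer square roots on the right-hand side of \eqref{E:1}. Second, differentiating gives
\[
dy=\frac{dx}{(1+x^2)^{3/2}},
\]
so the awkward factor $(1+x^2)^{-3/2}\,dx$ in the integrand collapses to $dy$.

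Combining these two observations, the integrand $(1+x^2)^{-3/2}[\varphi(x)+\sqrt{\varphi(x)}]^{-1/2}\,dx$ becomes $[1+\tfrac{4}{3}(y^2-y^4)+(1+\tfrac{4}{3}(y^2-y^4))^{1/2}]^{-1/2}\,dy$, and the limits go from $0$ to $1$. This yields \eqref{E:1} directly.

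There is essentially no obstacle here: one only needs to verify the chain-rule computation of $dy$ and the algebraic identity for $\varphi(x)$, together with the monotonicity of $y(x)$ (which ensures the change of limits is correct). The small point worth checking explicitly is that $y=x/\sqrt{1+x^2}$ is a strictly increasing bijection $(0,\infty)\to(0,1)$, so the substitution is valid and the orientation is preserved.
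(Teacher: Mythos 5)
Your proof is correct: the substitution $y=x/\sqrt{1+x^2}$ is precisely the composition of the three changes of variables in the paper ($y=x^{-1}$, then $x=1+y^2$, then $xy^2=1$), so you reach \eqref{E:1} by essentially the same route, compressed into a single step. The key computations $dy=(1+x^2)^{-3/2}\,dx$ and $\varphi(x)=1+\tfrac43(y^2-y^4)$, together with the monotonicity of $y(x)$, all check out.
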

\begin{proof}
We have $\varphi(1/y)=\varphi(y)$, so that the change of variables 
$y=x^{-1}$ leaves the integral almost equal
\[I=\int_0^\infty \frac{y\,dy}{(1+y^2)^{3/2}\left[1+\frac{4y^2}{3(1+y^2)^2}+\left(
1+\frac{4y^2}{3(1+y^2)^2}\right)^{1/2}\right]^{1/2}}.\]
After this the change  $x=1+y^2$ yields
\[I=\frac12\int_1^\infty \frac{dx}{x^{3/2}
\left[1+\frac{4x-4}{3x^2}+\left(
1+\frac{4x-4}{3x^2}\right)^{1/2}\right]^{1/2}}.\]
Finally we change again by means of $xy^2=1$ to get \eqref{E:1}.
\end{proof}

We have
\begin{proposition}
\begin{equation}\label{E:2}
I=\sum_{n=0}^\infty(-1)^n\frac{1}{2^{2n}}\binom{2n}{n}\sum_{k=0}^\infty 
\frac{(-1)^k}{k!}\frac{\Gamma(\frac{n+1}{2}+k)}{\Gamma(\frac{n+1}{2})}2^{2k}
\frac{2^{2k}}{3^k}\frac{(2k)!(2k)!}{(4k+1)!}.
\end{equation}
The series is not absolutely convergent, so the order of the sums is important here.
\end{proposition}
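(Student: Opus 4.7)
My plan is to reduce the integrand of \eqref{E:1} to a product of two binomial series and integrate term by term. Let $w=w(y)=\tfrac{4}{3}(y^{2}-y^{4})$, so that $0\le w\le 1/3$ on $[0,1]$, and set $u=1+w$. Since $u+\sqrt{u}=\sqrt{u}\,(1+\sqrt{u})$, the substitution $z=u^{-1/2}=(1+w)^{-1/2}\in[\sqrt{3}/2,1]$ yields the purely algebraic identity
\[
\frac{1}{\sqrt{u+\sqrt{u}}}=\frac{1}{u^{1/4}(1+\sqrt{u})^{1/2}}=\frac{z}{\sqrt{1+z}},
\]
which puts the integrand in a shape to which the binomial series for $(1+z)^{-1/2}$ directly applies.

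I expand outward by $(1+z)^{-1/2}=\sum_n(-1)^n\binom{2n}{n}z^n/4^n$, valid pointwise for $z\in[\sqrt{3}/2,1]$, to obtain
\[
\frac{1}{\sqrt{u+\sqrt{u}}}=\sum_{n=0}^{\infty}(-1)^n\frac{\binom{2n}{n}}{4^n}(1+w)^{-(n+1)/2},
\]
and inside each term I expand $(1+w)^{-(n+1)/2}=\sum_{k=0}^\infty \frac{(-1)^k}{k!}\frac{\Gamma(\frac{n+1}{2}+k)}{\Gamma(\frac{n+1}{2})}w^k$. Integrating $w^k=(4/3)^k(y^2-y^4)^k$ against $dy$ and using the beta-function evaluation
\[
\int_0^1 y^{2k}(1-y^2)^k\,dy=\tfrac{1}{2}B\bigl(k+\tfrac{1}{2},k+1\bigr)=\frac{4^k(2k)!^2}{(4k+1)!}
\]
(where the last step uses $\Gamma(k+\tfrac12)=(2k)!\sqrt{\pi}/(4^kk!)$) produces exactly the factor $\dfrac{2^{2k}\cdot 2^{2k}}{3^k}\dfrac{(2k)!(2k)!}{(4k+1)!}$ that appears in \eqref{E:2}.

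Two interchanges of sum and integral require justification. The inner one, $\sum_k\leftrightarrow\int_0^1 dy$, is routine: since $|w|\le 1/3<1$ on $[0,1]$, the Weierstrass $M$-test with $M_k=\frac{\Gamma((n+1)/2+k)}{k!\,\Gamma((n+1)/2)}(1/3)^k$, $\sum_kM_k=(2/3)^{-(n+1)/2}<\infty$, gives uniform convergence in $y$ and hence term-by-term integration.

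The main obstacle is the \emph{outer} interchange $\sum_n\leftrightarrow\int_0^1 dy$, because $z(y)\to 1$ as $y\to 0^+$ or $y\to 1^-$ and the series $\sum_n(-1)^n\binom{2n}{n}/4^n$ is only conditionally convergent at $z=1$. I would handle this by the bounded convergence theorem. The identity $\binom{2n+2}{n+1}/\bigl[4\binom{2n}{n}\bigr]=(2n+1)/(2n+2)<1$ shows that $\binom{2n}{n}z^n/4^n$ is positive and strictly decreases to $0$ for every $z\in[0,1]$, so Leibniz's criterion bounds the partial sums $S_N(z)$ of the alternating series by the first term, $|S_N(z)|\le 1$, uniformly in $z\in[0,1]$. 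Multiplying by $z\le 1$ keeps this bound, so the partial sums of the outer expansion are uniformly bounded in $y\in[0,1]$ and bounded convergence justifies pulling the limit through the integral in the order stated, which also explains the warning about absolute convergence at the end of the proposition.
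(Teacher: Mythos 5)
Your proposal is correct and follows essentially the same route as the paper: rewrite the integrand of \eqref{E:1} so that the outer binomial series in $\binom{-1/2}{n}$ applies, expand $(1+w)^{-(n+1)/2}$ inside each term, and integrate term by term via $\int_0^1 y^{2k}(1-y^2)^k\,dy=\tfrac12 B(k+\tfrac12,k+1)=2^{2k}\tfrac{(2k)!(2k)!}{(4k+1)!}$. The only difference is cosmetic: where you justify the outer interchange by the Leibniz bound $|S_N(z)|\le 1$ plus bounded convergence, the paper invokes Abel's theorem (uniform convergence of the binomial series on $[0,1]$ because it converges at $z=1$); both justifications are valid.
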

\begin{proof}
Let $h(y)=1+\frac43(y^2-y^4)$, \eqref{E:1} can be written as 
\[I=\int_0^1\frac{dy}{h(y)^{1/2}(1+h(y)^{-1/2})^{1/2}}.\]
For $0<y<1$ we have $h(y)>1$, therefore we may expand $(1+h(y)^{-1/2})^{-1/2}$ in power
series
\[I=\int_0^1\frac{dy}{h(y)^{1/2}}\sum_{n=0}^\infty\binom{-1/2}{n}h(y)^{-n/2}.\]
The binomial series $\sum_{n=0}^\infty\binom{-1/2}{n}x^n$ converges at $x=1$, therefore
this series converges uniformly on  $[0,1]$. For $y\in[0,1]$ we have $1\le h(y)\le 4/3$,
so that the above series converges uniformly and can be integrated term by term.
\[I=\sum_{n=0}^\infty\binom{-1/2}{n}\int_0^1 h(y)^{-(n+1)/2}\,dy.\]
For $y\in[0,1]$ we have $0\le\frac43(y^2-y^4)\le \frac13$, so that again we can expand
$h(y)^{-(n+1)/2}$ applying Newton binomial series
\[I=\sum_{n=0}^\infty\binom{-1/2}{n}\int_0^1 \sum_{k=0}^\infty\binom{-(n+1)/2}{k}
\frac{4^k}{3^k}y^{2k}(1-y^2)^{k}\,dy.\]
Again this series converges uniformly and can be integrated term by term
\[I=\sum_{n=0}^\infty\binom{-1/2}{n} \sum_{k=0}^\infty\binom{-(n+1)/2}{k}
\frac{4^k}{3^k}\int_0^1y^{2k}(1-y^2)^{k}\,dy.\]
The integral can be reduced to a binomial (\cite{gr2}*{3.251.1}) so that
\[\int_0^1y^{2k}(1-y^2)^{k}\,dy=\frac12B\Bigl(k+\frac12,k+1\Bigr)=
2^{2k}\frac{(2k)!\,(2k)!}{(4k+1)!}.\]
Applying also that
\[\binom{-1/2}{n}=(-1)^n\frac{1}{2^{2n}}\binom{2n}{n},\quad
\binom{-(n+1)/2}{k}=\frac{(-1)^k}{k!}\frac{\Gamma(\frac{n+1}{2}+k)}{\Gamma(\frac{n+1}{2})},
\]
yields
\[I=\sum_{n=0}^\infty (-1)^n\frac{1}{2^{2n}}\binom{2n}{n}
\sum_{k=0}^\infty \frac{(-1)^k}{k!}\frac{\Gamma(\frac{n+1}{2}+k)}{\Gamma(\frac{n+1}{2})}
\frac{4^k}{3^k}2^{2k}\frac{(2k)!\,(2k)!}{(4k+1)!}.\]
\end{proof}

\section{Transforming the series into  a triple integral.}

We will need a Lemma in the proof of Proposition \ref{P:newintegral}.
\begin{lemma}\label{L:1}
For any $t>0$ we have 
\begin{equation}\label{E:4}
U(t):=\sum_{k=0}^\infty 
\frac{(-1)^k}{k!}2^{2k}\frac{(2k)!(2k)!}{(4k+1)!}\frac{4^kt^k}{3^k}=
\int_0^1 e^{-\frac{16}{3}u^2(1-u)^2t}\,du.
\end{equation}
There is a constant $C$ such that for $t>1$ we have $0< U(t)\le C t^{-1/2}$.
\end{lemma}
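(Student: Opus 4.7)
The plan is to recognize the combinatorial factor $\frac{(2k)!(2k)!}{(4k+1)!}$ as a Beta integral, push the sum inside, and sum the resulting exponential series.

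First, I would note that
\[
\frac{(2k)!(2k)!}{(4k+1)!}=\frac{\Gamma(2k+1)^2}{\Gamma(4k+2)}=B(2k+1,2k+1)=\int_0^1 u^{2k}(1-u)^{2k}\,du.
\]
Substituting this into the series defining $U(t)$ and collecting all the powers of $4$ gives, formally,
\[
U(t)=\sum_{k=0}^\infty \frac{(-1)^k}{k!}\left(\frac{16t}{3}\right)^k\int_0^1 u^{2k}(1-u)^{2k}\,du.
\]
The next step is to swap sum and integral. For $u\in[0,1]$ one has $u^2(1-u)^2\le 1/16$ (maximum at $u=1/2$), so the $k$-th term of the series is bounded, uniformly in $u$, by $\frac{1}{k!}(t/3)^k$, whose sum is $e^{t/3}<\infty$. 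Dominated convergence (applied to partial sums) therefore justifies the interchange, yielding
\[
U(t)=\int_0^1\sum_{k=0}^\infty \frac{1}{k!}\Bigl(-\tfrac{16t}{3}u^2(1-u)^2\Bigr)^k\,du=\int_0^1 e^{-\frac{16}{3}u^2(1-u)^2 t}\,du,
\]
which is \eqref{E:4}.

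For the stated size bound, the integrand is positive on $(0,1)$, so $U(t)>0$ is immediate. For the upper bound I would exploit the symmetry $u\mapsto 1-u$ of $u^2(1-u)^2$ and split $[0,1]$ at $1/2$. On $[0,1/2]$ we have $1-u\ge 1/2$, hence $u^2(1-u)^2\ge u^2/4$, so
\[
\int_0^{1/2} e^{-\frac{16}{3}u^2(1-u)^2 t}\,du\le \int_0^{\infty} e^{-\frac{4t}{3}u^2}\,du=\frac{1}{2}\sqrt{\frac{3\pi}{4t}}.
\]
The same bound holds on $[1/2,1]$ by the symmetry $u\mapsto 1-u$. Adding gives $U(t)\le C t^{-1/2}$ with $C=\sqrt{3\pi/4}$ (valid for all $t>0$, not only $t>1$).

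Neither step is a serious obstacle: the identity is a standard Beta-integral trick producing an exponential generating series, and the tail estimate is a routine Laplace-type bound obtained by replacing $u^2(1-u)^2$ with its quadratic minorant near the two endpoints where the integrand is not exponentially small. The only point demanding a modicum of care is the justification of the interchange of summation and integration, which is why I verified a uniform-in-$u$ majorant for the partial sums before applying dominated convergence.
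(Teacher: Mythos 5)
Your proof is correct and follows essentially the same route as the paper: the Beta-integral identity $\frac{(2k)!(2k)!}{(4k+1)!}=\int_0^1 u^{2k}(1-u)^{2k}\,du$, interchange of sum and integral to produce the exponential, and a Gaussian bound obtained from $u^2(1-u)^2\ge u^2/4$ near the endpoints. You are in fact slightly more careful than the paper, which leaves the interchange as ``easy to justify'' and is a bit cavalier about the symmetry/factor of $2$ in the tail estimate, so nothing further is needed.
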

\begin{proof}
Notice that 
\[\frac{(2k)!(2k)!}{(4k+1)!}=\int_0^1(u(1-u))^{2k}\,du,\]
so that 
\[\sum_{k=0}^\infty 
\frac{(-1)^k}{k!}2^{2k}\frac{(2k)!(2k)!}{(4k+1)!}\frac{4^kt^k}{3^k}=
\sum_{k=0}^\infty 
\frac{(-1)^k}{k!}2^{2k}\int_0^1(u(1-u))^{2k}\,du\frac{4^kt^k}{3^k}.\]
It is easy to justify that we may here interchange the order of sum and integral so that 
\[\sum_{k=0}^\infty 
\frac{(-1)^k}{k!}2^{2k}\frac{(2k)!(2k)!}{(4k+1)!}\frac{4^kt^k}{3^k}=\int_0^1
e^{-\frac{16}{3}u^2(1-u)^2t}\,du.\]
For $0<u<1/2$ we have $(1-u)^2>1/4$, so that 
\[U(t)\le \int_0^{1/2}e^{-\frac{4}{3}u^2t}\,du\le \int_0^{\infty}e^{-4u^2t/3}\,du=\frac{\sqrt{3\pi}}{4\sqrt{t}}.\]
\end{proof}

\begin{proposition}\label{P:newintegral}
We have
\begin{equation}\label{E:3}
I=\int_0^\infty\Bigl(\sum_{n=0}^\infty\frac{(-1)^n}{2^{2n}}\binom{2n}{n}
\frac{t^{\frac{n-1}{2}}}{\Gamma(\frac{n+1}{2})}\Bigr)\Bigl(\sum_{k=0}^\infty 
\frac{(-1)^k}{k!}2^{2k}\frac{(2k)!(2k)!}{(4k+1)!}\frac{4^kt^k}{3^k}\Bigr)e^{-t}\,dt.
\end{equation}
\end{proposition}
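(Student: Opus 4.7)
The strategy is to realize the ratio $\Gamma(\frac{n+1}{2}+k)/\Gamma(\frac{n+1}{2})$ appearing in \eqref{E:2} as a Gamma integral and then exchange both summations with the resulting integral. First I would apply the identity
\[
\frac{\Gamma(\frac{n+1}{2}+k)}{\Gamma(\frac{n+1}{2})}=\frac{1}{\Gamma(\frac{n+1}{2})}\int_0^{\infty}t^{\frac{n-1}{2}+k}e^{-t}\,dt
\]
inside every term of \eqref{E:2}. The additional factor $t^{k}$ merges with the $k$-dependent coefficients, and the sum over $k$ then reproduces exactly the series for $U(t)$ from Lemma \ref{L:1}.

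The second step is to exchange the inner sum (over $k$) with the $t$-integral. This exchange is clean: from $\frac{(2k)!^{2}}{(4k+1)!}=\int_0^1 u^{2k}(1-u)^{2k}\,du\le 16^{-k}$, the $k$-th term of the series for $U(t)$ is bounded in modulus by $(t/3)^{k}/k!$, so absolute convergence is locally uniform and Fubini applies against the measure $t^{(n-1)/2}e^{-t}\,dt$. One arrives at
\[
I=\sum_{n=0}^{\infty}\frac{(-1)^{n}}{4^{n}}\binom{2n}{n}\frac{1}{\Gamma(\frac{n+1}{2})}\int_0^{\infty}t^{\frac{n-1}{2}}e^{-t}U(t)\,dt,
\]
which is \eqref{E:3} once the outer sum is pushed inside the integral.

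The last exchange is the genuinely delicate step and the main obstacle: the double series in \eqref{E:2} is only conditionally convergent, and a quick check using Stirling together with the decay $U(t)\le C\min(1,t^{-1/2})$ from Lemma \ref{L:1} shows that the absolute values of the individual terms in the display above decay only like $1/n$, so Tonelli cannot be invoked. I would therefore proceed by truncation: let
\[
S_{N}(t):=\sum_{n=0}^{N}\frac{(-1)^{n}}{4^{n}}\binom{2n}{n}\frac{t^{(n-1)/2}}{\Gamma(\frac{n+1}{2})},
\]
so that the $N$-th partial sum of the outer series above equals $\int_0^{\infty}e^{-t}U(t)S_{N}(t)\,dt$, and show that this tends to $\int_0^{\infty}e^{-t}U(t)S(t)\,dt$ by dominated convergence. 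The crux is producing a uniform-in-$N$, integrable majorant for $|S_{N}(t)|$; to get one I would split the partial sum according to the parity of $n$, apply the Legendre duplication formula together with Stirling to each sub-sum, and extract cancellations from the alternating signs, combining the result with the decay of $U(t)$ to obtain an $L^{1}(e^{-t}\,dt)$ bound. This careful control of conditionally convergent partial sums is where the real technical work of the proof will lie.
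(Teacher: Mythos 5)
Your overall route is the same as the paper's: represent $\Gamma(\tfrac{n+1}{2}+k)/\Gamma(\tfrac{n+1}{2})$ by the Euler integral, interchange the (absolutely convergent) $k$-sum with the $t$-integral to produce $U(t)$, and then justify moving the outer $n$-sum inside the integral by dominated convergence applied to the partial sums $S_N(t)$. Your first two steps are fine, and you correctly diagnose both that the terms of the resulting $n$-series only decay like $1/n$ (so Tonelli is unavailable) and that everything hinges on a uniform-in-$N$ integrable majorant for $S_N(t)U(t)e^{-t}$.

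But that majorant is exactly what you do not produce: you announce that you would split by parity, use the duplication formula and Stirling, and ``extract cancellations,'' without stating what bound on $|S_N(t)|$ this yields or checking that the result is integrable --- and this is the entire content of the proposition beyond formal manipulation. Note the bound cannot be small: for $N\approx 2t$ the partial sums genuinely reach size about $e^t/(2\pi t)$, so any correct majorant must carry a factor $e^{t}$, and the proof must use the $e^{-t}$ together with $U(t)\le C t^{-1/2}$ to land in $L^1$. The paper closes this gap concretely: with $f(n)=4^{-n}\binom{2n}{n}\,t^{(n-1)/2}/\Gamma(\tfrac{n+1}{2})$, Gautschi's inequality shows $f(n)$ is increasing for $n\le 2t-8$ and decreasing for $n\ge 2t$, so the alternating partial sums are bounded by a fixed multiple of $\max_n f(n)$, which by Stirling is of order $e^t/t$ for large $t$ and $t^{-1/2}$ for small $t$; combined with $U(t)\le\min(1,Ct^{-1/2})$ this gives the dominating function $c\,t^{-1/2}$ on $(0,1)$ and $c\,t^{-3/2}$ on $(1,\infty)$. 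Either this unimodality argument or a genuinely worked-out version of your pairing idea (which would recover the same $\max_n f(n)$ bound via $\sum_n|f(n)-f(n+1)|$) is required; as written, your proof is incomplete at its only delicate point.
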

\begin{proof}
Let $U(t)$ be the series considered in Lemma \ref{L:1}, by this Lemma we have 
$0\le U(t)\le 1$ for any $t>0$.  We will apply the dominated convergence theorem to prove 
that 
\begin{multline*}
\lim_{N\to\infty}
\int_0^\infty\Bigl(\sum_{n=0}^N\frac{(-1)^n}{2^{2n}}\binom{2n}{n}
\frac{t^{\frac{n-1}{2}}}{\Gamma(\frac{n+1}{2})}\Bigr)U(t)e^{-t}\,dt\\=
\int_0^\infty\Bigl(\sum_{n=0}^\infty\frac{(-1)^n}{2^{2n}}\binom{2n}{n}
\frac{t^{\frac{n-1}{2}}}{\Gamma(\frac{n+1}{2})}\Bigr)U(t)e^{-t}\,dt
\end{multline*}
Assuming this,  the right hand side of equation \eqref{E:3} is equal to 
\begin{equation}\label{E:step1}
\sum_{n=0}^\infty\frac{(-1)^n}{2^{2n}}\binom{2n}{n}
\frac{1}{\Gamma(\frac{n+1}{2})}\int_0^\infty t^{\frac{n-1}{2}}U(t)e^{-t}\,dt
=\sum_{n=0}^\infty\frac{(-1)^n}{2^{2n}}\binom{2n}{n}
\frac{a_n}{\Gamma(\frac{n+1}{2})},
\end{equation}
where 
\[a_n=\int_0^\infty \sum_{k=0}^\infty 
\frac{(-1)^k}{k!}2^{2k}\frac{(2k)!(2k)!}{(4k+1)!}\frac{4^kt^{\frac{n-1}{2}+k}}{3^k}
e^{-t}\,dt.\]
We may integrate term by term, since in this case the integral of the absolute value 
of the terms of the series have a finite sum.
\[a_n=\sum_{k=0}^\infty 
\frac{(-1)^k}{k!}2^{2k}\frac{(2k)!(2k)!}{(4k+1)!}\frac{4^k \Gamma(\frac{n+1}{2}+k)}{3^k}.\]
Inserting this value in \eqref{E:step1} we get that the right hand side of \eqref{E:3}
is equal to the right hand side of \eqref{E:2}, and therefore to our integral $I$. 

It remains to justify the application of  the dominated convergence theorem above. 
Consider the series $\sum_{n=1}^\infty (-1)^n f(n)$ with 
\[f(n):=\frac{1}{2^{2n}}\binom{2n}{n}\frac{t^{\frac{n-1}{2}}}{\Gamma(\frac{n+1}{2})}.\]
The sequence $f(n)$ for $n\ge1$ is increasing for $n\le2t-8$ and decreasing for $n\ge2t$. To see 
it we use Gautschi's inequalities \cite{DLMF}*{\href{http://dlmf.nist.gov/5.6.E4}{5.6.4}}.
For $n\le2t-8$ and $n\ge1$ we have
\begin{multline*}
\frac{f(n)}{f(n+1)}=\frac{2n+2}{2n+1}\frac{\Gamma(\frac{n}{2}+1)}
{\Gamma(\frac{n}{2}+\frac12)}\frac{1}{\sqrt{t}}\le\Bigl(\frac{2n+2}{2n+1}\Bigr)\Bigl(1+\frac{n}{2}\Bigr)^{\frac12}\frac{1}{\sqrt{t}}\\\le 
\Bigl(\frac{n}{2t}\Bigr)^{1/2}\Bigl(\frac{2n+2}{2n+1}\Bigr)\Bigl(1+\frac{2}{n}\Bigr)^{1/2}
\le \Bigl(\frac{n}{2t}\Bigr)^{1/2}\Bigl(1+\frac{2}{n}\Bigr)\le 1.
\end{multline*}
The last inequality is true because
\[\frac{n}{2t}\Bigl(1+\frac2n\Bigr)^2=\frac{n}{2t}+\frac{2}{t}+\frac{2}{nt}\le 1-\frac{4}{t}+\frac{2}{t}+\frac{2}{t}\le1\]
For $n\ge 2t$ we have
\[
\frac{f(n)}{f(n+1)}=\frac{2n+2}{2n+1}\frac{\Gamma(\frac{n}{2}+1)}
{\Gamma(\frac{n}{2}+\frac12)}\frac{1}{\sqrt{t}}\ge
\Bigl(\frac{2n+2}{2n+1}\Bigr)\Bigl(\frac{n}{2}\Bigr)^{\frac12}\frac{1}{\sqrt{t}}
\ge1\]
For a monotone sequence $|\sum_{n=N}^M (-1)^n a_n|\le \max_{N\le n\le M}|a_n|$. 
It follows that 
\[\sup_M\Bigl|\sum_{n=0}^M(-1)^n f(n)\Bigr|\le 11\sup_{n\ge0} f(n)\]

By Stirling formula we have 
\[\frac{1}{2^{2n}}\binom{2n}{n}\frac{t^{\frac{n-1}{2}}}{\Gamma(\frac{n+1}{2})}\sim
\frac{t^{\frac{n-1}{2}}}{\sqrt{2\pi}\; \Gamma(\frac{n}{2}+1)}\sim
\frac{t^{\frac{n-1}{2}}}{\sqrt{2\pi}}\frac{1}{\sqrt{\pi n}}\Bigl(\frac{2e}{n}\Bigr)^{n/2}.\]
Since the two functions are of the same order, the maximum of one is bounded by a constant
for the maximum of the other. 
For $t\gg1$ the maximum is attained approximately at $n=2t$ and is equal to
\[\frac{e^t}{2\pi t}\]
For $t<\frac12$ the sequence $f(n)$ is decreasing and the maximum is $f(0)>f(1)$. Therefore
for $t<1/2$ the maximum is $f(0)=\frac{1}{\sqrt{\pi t}}$. 
Applying that $0<U(t)<1$ for $0<t<1$ and $\le ct^{-1/2}$ for $t>1$ we get
\[\Bigl|
\Bigl(\sum_{n=0}^N\frac{(-1)^n}{2^{2n}}\binom{2n}{n}
\frac{t^{\frac{n-1}{2}}}{\Gamma(\frac{n+1}{2})}\Bigr)U(t)e^{-t}\Bigr|\le 
\begin{cases} 
\frac{c}{\sqrt{t}} & \text{ for $0<t<1$},\\
\frac{c}{t^{3/2}}& \text{ for $t>1$},
\end{cases}\]
for some constant $c$.  Therefore we have a uniform bound by an integrable function.
\end{proof}

Let $\Omega$ be the region equal to the complex plane with a cut along the negative real axis.
Let us denote by $\sqrt{z}$ the analytic function on $\Omega$ defined as the principal value, taking $|\arg(z)|<\pi$.
On $\Omega$ the function $z+\sqrt{z}$ is never negative, for example when $\Im(z)>0$ 
we have also $\Im(\sqrt{z})>0$.   Therefore $\sqrt{z+\sqrt{z}}$ is a well defined and analytic function on $\Omega$. We will use this notation everywhere in the paper.

\begin{proposition}
Let $\delta>0$,  for any $t>0$ we have
\begin{equation}\label{E:5}
\frac{1}{2\pi i}\int_H\frac{e^{tz}\,dz}{\sqrt{z+\sqrt{z}}}=\sum_{n=0}^\infty\frac{(-1)^n}{2^{2n}}\binom{2n}{n}
\frac{t^{\frac{n-1}{2}}}{\Gamma(\frac{n+1}{2})},
\end{equation}
where $H$ is Hankel's contour:  the boundary of the region containing the 
points with a distance $<\delta$ to the
negative real axis.
\end{proposition}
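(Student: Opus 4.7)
The plan is to combine the binomial expansion of $(z+\sqrt z)^{-1/2}$ at infinity with Hankel's contour representation of $1/\Gamma(s)$. First, I would observe that the integrand is analytic on $\Omega$ and that $|e^{tz}|=e^{t\Re(z)}$ decays exponentially along the horizontal rays $\Im(z)=\pm\delta$, so by Cauchy's theorem the integral on the left of \eqref{E:5} is independent of the particular value of $\delta>0$ chosen. I would then fix $\delta>1$, so that $|z|\ge\delta>1$ everywhere on $H$.

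Next, for $|z|>1$ one can factor $\sqrt{z+\sqrt z}=z^{1/2}\sqrt{1+z^{-1/2}}$ using the principal branches defined on $\Omega$, and expand by the binomial theorem
\[\frac{1}{\sqrt{z+\sqrt z}}=\sum_{n=0}^\infty\binom{-1/2}{n}z^{-(n+1)/2},\]
the series converging absolutely and uniformly on $\{|z|\ge\delta\}$. Using $\binom{-1/2}{n}=(-1)^n\frac{1}{2^{2n}}\binom{2n}{n}=O(n^{-1/2})$, one obtains a uniform in $N$ bound of the form
\[\Bigl|\sum_{n=0}^N\binom{-1/2}{n}z^{-(n+1)/2}\Bigr|\le\frac{C_\delta}{|z|^{1/2}}\quad\text{on }H.\]
Combined with the exponential decay of $e^{tz}$ on the horizontal rays and compactness of the semicircular piece of $H$, the function $C_\delta\,e^{t\Re(z)}|z|^{-1/2}$ serves as an integrable majorant on $H$, so dominated convergence justifies interchanging the sum and the integral.

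It then remains to evaluate each term individually. Starting from Hankel's classical formula $\frac{1}{\Gamma(s)}=\frac{1}{2\pi i}\int_H e^{w}w^{-s}\,dw$ and performing the substitution $w=tz$, together with the invariance of the Hankel contour under positive dilations (by another application of Cauchy's theorem), one obtains
\[\frac{1}{2\pi i}\int_H e^{tz}z^{-(n+1)/2}\,dz=\frac{t^{(n-1)/2}}{\Gamma\bigl(\tfrac{n+1}{2}\bigr)}\qquad(n\ge 0).\]
Summing this against the coefficients $\binom{-1/2}{n}=(-1)^n2^{-2n}\binom{2n}{n}$ reproduces exactly the right-hand side of \eqref{E:5}.

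The main obstacle is the justification of the termwise integration: it requires both the uniform-in-$N$ estimate on the partial sums and the integrability of the majorant along each piece of $H$. Both are routine quantitative bookkeeping once the asymptotic $\binom{-1/2}{n}=O(n^{-1/2})$ and a lower bound for $|1+z^{-1/2}|$ on $\{|z|\ge\delta>1\}$ are in hand; the rest of the argument is a direct application of the Hankel representation of $1/\Gamma$.
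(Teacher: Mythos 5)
Your proposal is correct and follows essentially the same route as the paper: use Cauchy's theorem to fix a convenient $\delta$, expand $1/\sqrt{z+\sqrt{z}}$ as the binomial series in $z^{-1/2}$, integrate term by term, and evaluate each term via the Hankel representation of $1/\Gamma(s)$ after the dilation $w=tz$. Your dominated-convergence justification of the interchange (with the uniform-in-$N$ bound on partial sums and the exponentially decaying majorant) is in fact slightly more careful than the paper's appeal to uniform convergence alone on the unbounded contour.
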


\begin{proof}
By Cauchy's Theorem the integral do not depend on $\delta>0$. Take $\delta=2$, for example,
then for $z$ in the path $H$ we have $\sqrt{z+\sqrt{z}}=\sqrt{z}\sqrt{1+1/\sqrt{z}}$, where all 
the square roots are principal values. Since $|1/\sqrt{z}|\le 2^{-1/2}$ we have
\[\frac{1}{\sqrt{z+\sqrt{z}}}=\sum_{n=0}^\infty \binom{-1/2}{n}(1/\sqrt{z})^{n+1}
=\sum_{n=0}^\infty \binom{-1/2}{n} z^{-\frac{n+1}{2}},\]
where $z^s:=e^{s\log z}$ with  $\log z$ meaning the principal logarithm in $\Omega$.

The series converges uniformly for $z$ in $H$, therefore we may integrate term by term and 
\[\frac{1}{2\pi i}\int_H\frac{e^{tz}\,dz}{\sqrt{z+\sqrt{z}}}=\sum_{n=0}^\infty \binom{-1/2}{n}\frac{1}{2\pi i}\int_H 
z^{-\frac{n+1}{2}}e^{tz}\,dz.\]
Since we assume $t>0$ $w=tz$ run through another Hankel contour when $z$ run through $H$
and the integral do not depend on which Hankel's contour we integrate, therefore
\[\frac{1}{2\pi i}\int_H\frac{e^{tz}\,dz}{\sqrt{z+\sqrt{z}}}=
\sum_{n=0}^\infty \binom{-1/2}{n} t^{\frac{n-1}{2}}\frac{1}{2\pi i}\int_H 
w^{-\frac{n+1}{2}}e^{w}\,dw.\]
Recalling the integral representation of the $\Gamma$ function
\href{https://dlmf.nist.gov/5.9.E2}{\cite{DLMF}*{\textbf{5.9.2}}}
\[\frac{1}{\Gamma(s)}=\frac{1}{2\pi i}\int_H z^{-s}e^z\,dz\]
we get \eqref{E:5}.
\end{proof}

For the next Proposition we pick a particular parametrization of $H$. We fix the value 
of $\delta=\frac12$, and fix a concrete  parametrization of  $H$, namely $\gamma(\xi)$ defined as follows.  
Take $\gamma(\xi)=\frac12(\xi+1-i)$ for $\xi\le -1$, $\gamma(\xi)=\frac12e^{\pi i \xi/2}$ for $-1<\xi<1$ and 
$\gamma(\xi)=\frac12(1-\xi+i)$ for $\xi>1$. Then any integral
\[\int_Hf(z)\,dz=\int_{-\infty}^\infty f(\gamma(\xi))\gamma'(\xi)\,d\xi\]
is an ordinary Lebesgue  integral in $\R$. Notice also that $|\gamma'(\xi)|\le \pi/4$ for all $\xi\in\R$.

\begin{proposition}
We have 
\begin{equation}\label{E:6}
I= \frac{1}{2\pi i}\int_0^\infty\int_H\int_0^1
 \frac{e^{tz}}{\sqrt{z+\sqrt{z}}} e^{-\frac{16}{3}u^2(1-u)^2t}e^{-t}\,du\,dz\,dt
\end{equation}
where the integral is an absolutely convergent triple integral. 
\end{proposition}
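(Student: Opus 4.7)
The plan is to combine the three ingredients already at hand: the iterated integral representation \eqref{E:3} of $I$, the Laplace-type formula \eqref{E:4} of Lemma \ref{L:1} for the inner $k$-series, and the Hankel representation \eqref{E:5} of the $n$-series. Substituting \eqref{E:4} and \eqref{E:5} into the two series factors in \eqref{E:3} formally produces the right-hand side of \eqref{E:6}. The real content of the proposition is the assertion of absolute convergence, which is what allows us to regard the iterated integrals as a single triple integral and to invoke Fubini's theorem.

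To verify absolute convergence I would integrate the modulus
\[\frac{e^{t\Re z}}{|\sqrt{z+\sqrt z}|}\,e^{-\frac{16}{3}u^2(1-u)^2 t}\,e^{-t}\]
in the order $du$, then along $H$, then $dt$. Performing the $u$-integration first collapses the inner factor to $U(t)$, and by Lemma \ref{L:1} together with the obvious bound $\int_0^1 1\,du=1$ we have $U(t)\le\min(1,Ct^{-1/2})$ for all $t>0$. Using the parametrization $\gamma$ fixed just before the statement (with $|\gamma'|\le\pi/4$), I would split $H$ into the semicircular arc $\{|\xi|\le 1\}$, where $|z|=1/2$ and $z+\sqrt z$ is bounded away from $0$ so $1/|\sqrt{z+\sqrt z}|$ is majorized by a constant, and the two linear tails $\{|\xi|\ge 1\}$, on which $\Re z=-(|\xi|-1)/2<0$ and $|\sqrt{z+\sqrt z}|\sim\sqrt{|\xi|/2}$ as $|\xi|\to\infty$. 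On the arc the factor $e^{t\Re z}e^{-t}\le e^{-t/2}$ immediately yields an integrable majorant, so the arc contribution is harmless.

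The main obstacle is the tail estimate, because $|\xi|^{-1/2}$ is not integrable at infinity and one must rely on the exponential damping $e^{tz}e^{-t}=e^{-t(1-\Re z)}$. The key one-dimensional bound is
\[\int_1^\infty\frac{e^{-t(\xi-1)/2}}{\sqrt{\xi}}\,d\xi=O(t^{-1/2}) \quad\text{uniformly in } t>0,\]
which follows from the rescaling $s=t(\xi-1)/2$. Multiplying by $U(t)e^{-t}$ and integrating in $t$ then gives a finite value: for $t<1$ the integrand is $O(t^{-1/2})$, integrable at the origin, while for $t>1$ the bound $U(t)\le Ct^{-1/2}$ leaves $O(e^{-t}/t)$, integrable at infinity. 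The subtle point is that this tail estimate must be uniform in $t\in(0,\infty)$, since for small $t$ the exponential damping is weak; the rescaling argument is precisely what handles this. Once absolute convergence of the triple integral is secured, Fubini's theorem converts the iterated integrals from \eqref{E:3}, with the inner series replaced by \eqref{E:4} and \eqref{E:5}, into the triple integral \eqref{E:6}.
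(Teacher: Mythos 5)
Your proposal is correct and follows essentially the same route as the paper: substitute \eqref{E:4} and \eqref{E:5} into \eqref{E:3}, then prove absolute convergence by splitting the Hankel contour into the arc $|\xi|\le 1$ (where $e^{t\Re z}e^{-t}\le e^{-t/2}$) and the two linear tails, where the uniform estimate $\int_1^\infty \xi^{-1/2}e^{-t(\xi-1)/2}\,d\xi = O(t^{-1/2})$ yields an integrable majorant in $t$. The only cosmetic difference is that you integrate in $u$ first and use $U(t)\le Ct^{-1/2}$, while the paper simply bounds the $u$-factor by $1$ and arrives at the same $e^{-t/2}\sqrt{2\pi/t}$ bound.
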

\begin{proof}
Substituting \eqref{E:4} and \eqref{E:5} into \eqref{E:3} we have
\[I= \int_0^\infty\Bigl(\frac{1}{2\pi i}\int_H \frac{e^{tz}\,dz}{\sqrt{z+\sqrt{z}}}\Bigr)\Bigl(\int_0^1 e^{-\frac{16}{3}u^2(1-u)^2t}\,du\Bigr)e^{-t}\,dt\]
Therefore we only need to show that the integral is absolutely convergent. We divide the 
integral in $\xi$ in three intervals. For $|\xi|\le1$ we have
\[\Bigl|\frac{e^{tz}}{\sqrt{z+\sqrt{z}}} e^{-\frac{16}{3}u^2(1-u)^2t}e^{-t}\gamma'(\xi)\Bigr|
\ll e^{\frac{t}{2}\cos\pi\xi/2}e^{-t}\le e^{-t/2};\]
and $e^{-t/2}$ have a finite integral  for $(\xi,u,t)\in[-1,1]\times[0,1]\times[0,\infty)$. 
For $\xi>1$ we have 
\[\Bigl|\frac{e^{tz}}{\sqrt{z+\sqrt{z}}} e^{-\frac{16}{3}u^2(1-u)^2t}e^{-t}\gamma'(\xi)\Bigr|
\ll \xi^{-1/2} e^{\frac{t}{2}(1-\xi)}e^{-t}\le \xi^{-1/2}e^{-t\xi/2-t/2};\]
which have a finite integral  on the set of $(\xi,u,t)\in[1,+\infty)\times[0,1]\times[0,\infty)$.
Notice that 
\[\int_1^\infty\xi^{-1/2}e^{-t\xi/2}e^{-t/2}\,d\xi\le \int_0^\infty\xi^{-1/2}e^{-t\xi/2}e^{-t/2}\,d\xi=
e^{-t/2}\sqrt{2\pi/t}.\]
Which is integrable for $t\in(0,+\infty)$.  The case $\xi<-1$ is treated in the same way.
\end{proof}

\section{Another simple integral.}

\begin{proposition}
We have 
\begin{equation}
I=\int_0^1\frac{dx}{\sqrt{1+\frac{16}{3}x^2(1-x)^2+\sqrt{1+\frac{16}{3}x^2(1-x)^2}}}.
\end{equation}
\end{proposition}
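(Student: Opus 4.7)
The plan is to start from the triple integral representation \eqref{E:6}, use its absolute convergence to swap the order of integration so that $t$ is integrated first, and then evaluate the remaining contour integral in $z$ by the residue theorem.

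First I would carry out the $t$-integration. Set $A(u) := 1 + \tfrac{16}{3} u^2(1-u)^2$, so $A(u) \ge 1$ for $u \in [0,1]$. By the explicit parametrization of $H$ given before the previous proposition (namely $\gamma(\xi)=\frac12(\xi+1-i)$, $\frac12e^{i\pi\xi/2}$, $\frac12(1-\xi+i)$), one has $\Re z \le \tfrac{1}{2} < A(u)$ for every $z \in H$, so the exponent $t(z-A(u))$ has strictly negative real part and
\[
\int_0^\infty e^{t(z - A(u))}\,dt \;=\; \frac{1}{A(u)-z}.
\]
Substituting this into \eqref{E:6}, which is legitimate by Fubini since the triple integral is absolutely convergent, gives
\[
I \;=\; \int_0^1 \Bigl(\frac{1}{2\pi i}\int_H \frac{dz}{(A(u)-z)\sqrt{z+\sqrt z}}\Bigr)\,du.
\]

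The key step is to evaluate the inner contour integral. Set $f(z) = \frac{1}{(A-z)\sqrt{z+\sqrt z}}$, dropping the dependence on $u$ from the notation. This is meromorphic on $\Omega$ with a single simple pole at $z = A$, of residue $-1/\sqrt{A+\sqrt A}$. Apply Cauchy's theorem on the slit disk $D_R = \{|z| < R\} \cap \Omega$ minus a small disk around $z = A$, with $R > A$. Since $|f(z)| = O(|z|^{-3/2})$ as $|z|\to\infty$, the large circle $|z|=R$ contributes nothing in the limit $R \to \infty$, and the positively-oriented cut contributions yield
\[
\int_{-\infty}^0 \bigl[f(x+i0) - f(x-i0)\bigr]\,dx \;=\; 2\pi i\,\operatorname{Res}_{z=A}f \;=\; -\frac{2\pi i}{\sqrt{A+\sqrt A}}.
\]
On the other hand, collapsing $H$ onto the two sides of the cut by sending the parameter $\delta\to 0$ (the semicircle at $|z|=\delta$ contributes $O(\delta^{3/4}) \to 0$ since $|f(z)| = O(|z|^{-1/4})$ near $0$), and noting that the paper's parametrization traverses the bottom line of $H$ rightward and the top line leftward, one obtains
\[
\int_H f(z)\,dz \;=\; -\int_{-\infty}^0 \bigl[f(x+i0) - f(x-i0)\bigr]\,dx \;=\; \frac{2\pi i}{\sqrt{A+\sqrt A}},
\]
so that $\frac{1}{2\pi i}\int_H f(z)\,dz = \frac{1}{\sqrt{A+\sqrt A}}$.

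Substituting this value back into the expression for $I$ yields
\[
I \;=\; \int_0^1 \frac{du}{\sqrt{A(u) + \sqrt{A(u)}}} \;=\; \int_0^1 \frac{du}{\sqrt{1+\tfrac{16}{3}u^2(1-u)^2+\sqrt{1+\tfrac{16}{3}u^2(1-u)^2}}},
\]
which, after renaming the variable, is the claimed identity. The main obstacle I anticipate is the orientation bookkeeping in the contour step: making sure the jump $f(x+i0)-f(x-i0)$ and the direction in which each edge of $H$ is traversed combine with the correct sign to produce $+1/\sqrt{A+\sqrt A}$ rather than its negative, and verifying carefully that the boundary contributions at $|z|=R$ and $|z|=\delta$ both vanish in the respective limits.
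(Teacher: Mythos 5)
Your proposal is correct and follows essentially the same route as the paper: Fubini on the absolutely convergent triple integral \eqref{E:6}, the Laplace integral $\int_0^\infty e^{t(z-A(u))}\,dt=(A(u)-z)^{-1}$ (valid since $\Re z\le \tfrac12<A(u)$ on $H$), and evaluation of $\frac{1}{2\pi i}\int_H \frac{dz}{(A-z)\sqrt{z+\sqrt{z}}}$ via the residue at $z=A(u)$. The only difference is bookkeeping in the contour step: you collapse $H$ onto the two edges of the cut and compare with the positively oriented boundary of the slit disk, while the paper closes $H$ with a large circular arc $\Gamma_R$ and shrinks onto a small circle $C$ around the pole; both yield $\frac{1}{2\pi i}\int_H \frac{dz}{(A-z)\sqrt{z+\sqrt{z}}}=\frac{1}{\sqrt{A+\sqrt{A}}}$ with the correct sign.
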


\begin{proof}
Applying Fubini's Theorem in \eqref{E:6} we may integrate first with respect to $t$, 
and we obtain
\[I=\int_0^1\Bigl(\frac{1}{2\pi i}\int_H \frac{dz}{\sqrt{z+\sqrt{z}}}\frac{1}{(1-z+\frac{16}{3}u^2(1-u)^2)}\Bigr)\,du.\]
Applying Cauchy's residue theorem the integral 
\[\int_{\Gamma_R}\frac{dz}{\sqrt{z+\sqrt{z}}(1-z+\frac{16}{3}u^2(1-u)^2)}=\int_C\frac{dz}{\sqrt{z+\sqrt{z}}(1-z+\frac{16}{3}u^2(1-u)^2)}.\]

\begin{figure}
\includegraphics[width=0.5\textwidth]{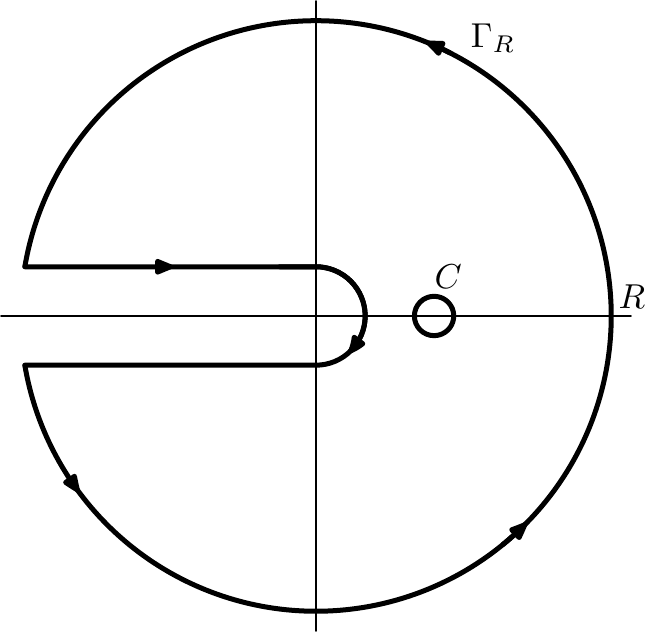}
\end{figure}

\noindent where $C$ is a circle with center at $1+\frac{16}{3}u^2(1-u)^2)$ and radius $0<r<\frac12$ and 
$\Gamma_R$ is the contour in the figure formed with part of  the circle of radius $R>2$ and 
part of Hankel's path $H$.  When $R\to+\infty$ the integral along the portion of the circumference of radius $R$ tends to $0$. Therefore we obtain
\[\int_{H}\frac{dz}{\sqrt{z+\sqrt{z}}(1-z+\frac{16}{3}u^2(1-u)^2)}=\int_C\frac{dz}{\sqrt{z+\sqrt{z}}(z-1-\frac{16}{3}u^2(1-u)^2)}.\]
Notice that there is a change of sign because  $\Gamma_R$ contains a portion of $H$ 
in reverse sense.

The integral in $C$ is equal to the residue so that we get 
\[I=\int_0^1\frac{du}{\sqrt{1+\frac{16}{3}u^2(1-u)^2+\sqrt{1+\frac{16}{3}u^2(1-u)^2}}}\,du.\]
\end{proof}

\section{Simple transformations of the last integral.}

\begin{proposition}
\begin{equation}\label{E:7}
I=\int_0^1\frac{dx}{2\sqrt{1-x}}\frac{1}
{\sqrt{1+\frac{1}{3}x^2+\sqrt{1+\frac{1}{3}x^2}}}.
\end{equation}
\end{proposition}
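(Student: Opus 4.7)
The plan is to reduce the previous representation
\[I=\int_0^1\frac{du}{\sqrt{1+\tfrac{16}{3}u^2(1-u)^2+\sqrt{1+\tfrac{16}{3}u^2(1-u)^2}}}\]
to the target formula by a single change of variable that absorbs the factor $16u^2(1-u)^2$ into a clean square.

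First I would exploit the symmetry of the integrand. The function $u(1-u)$ (and hence the whole integrand) is invariant under $u\mapsto 1-u$, so
\[I=2\int_0^{1/2}\frac{du}{\sqrt{1+\tfrac{16}{3}u^2(1-u)^2+\sqrt{1+\tfrac{16}{3}u^2(1-u)^2}}}.\]
This step is essential because the map we are about to introduce is only a bijection on a half-interval.

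Next I would set $x=4u(1-u)$. Since $\tfrac{16}{3}u^2(1-u)^2=\tfrac{x^2}{3}$, this converts the two instances of $u^2(1-u)^2$ inside the double square root into $x^2/3$, which is exactly the combination appearing in the target formula. On $[0,1/2]$ the map is a smooth increasing bijection onto $[0,1]$: solving the quadratic $4u^2-4u+x=0$ selects the root $u=\tfrac12(1-\sqrt{1-x})$, so $1-2u=\sqrt{1-x}$ and consequently
\[dx=4(1-2u)\,du=4\sqrt{1-x}\,du,\qquad du=\frac{dx}{4\sqrt{1-x}}.\]

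Substituting, with $u=0\mapsto x=0$ and $u=1/2\mapsto x=1$, yields
\[I=2\int_0^{1}\frac{1}{\sqrt{1+\tfrac{1}{3}x^2+\sqrt{1+\tfrac{1}{3}x^2}}}\cdot\frac{dx}{4\sqrt{1-x}}=\int_0^1\frac{dx}{2\sqrt{1-x}}\frac{1}{\sqrt{1+\tfrac{1}{3}x^2+\sqrt{1+\tfrac{1}{3}x^2}}},\]
which is the claimed identity. There is no real obstacle here; the only point to verify carefully is the correct branch in solving the quadratic (one must take the root that sends $u=0$ to $x=0$, not $u=1$), and the observation that the endpoint singularity $1/\sqrt{1-x}$ at $x=1$ is integrable, matching the original integrand's smoothness at $u=1/2$.
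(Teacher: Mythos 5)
Your proof is correct and is essentially the paper's argument: the paper also symmetrizes to $[0,\tfrac12]$ and then substitutes $u=\sin^2\theta$ followed by the double-angle variable $x=\sin^2 2\theta$, whose composition is exactly your map $x=4u(1-u)$ with $\sqrt{1-x}=1-2u$. Your version merely performs this composite substitution directly, which is a harmless streamlining.
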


\begin{proof}
In the integral \eqref{E:6} the integrand depends only of $x(1-x)$ so that the integral
is equal to
\[I=2\int_0^{1/2}\frac{dx}{\sqrt{1+\frac{16}{3}x^2(1-x)^2+\sqrt{1+\frac{16}{3}x^2(1-x)^2}}}\,dx.\]
Changing variables with  $x=\sin^2\theta$ 
\[I=2\int_0^{\pi/4}\frac{2\sin\theta\cos\theta\,d\theta}
{\sqrt{1+\frac{16}{3}\sin^4\theta\cos^4\theta+
\sqrt{1+\frac{16}{3}\sin^4\theta\cos^4\theta}}}.\]
This can be expressed in terms of the double angle
\[
I=\int_0^{\pi/4}
\frac{2\sin2\theta\,d\theta}
{\sqrt{1+\frac{1}{3}\sin^42\theta+\sqrt{1+\frac{1}{3}\sin^42\theta}}}.\]
The change of variables $x=1-\cos^22\theta$ transform this in \eqref{E:7}.
\end{proof}

\section{Integrand with only one square root.}

\begin{proposition}
\begin{equation}\label{E:8}
I=\frac{\sqrt{3}}{\sqrt{2-\sqrt{3}}}\int_0^{2-\sqrt{3}}
\Bigl(\frac{1-x+x^2}{x(1-x^2)(2-x)}\Bigr)^{1/2}\,\frac{dx}{2+\sqrt{3}-x}.
\end{equation}
\end{proposition}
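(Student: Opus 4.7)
The plan is to perform a chain of three substitutions: a trigonometric one in \eqref{E:7} to eliminate the nested square root; a Weierstrass half-angle substitution to algebraize the integrand; and a M\"obius involution to permute the branch points into the configuration of \eqref{E:8}.

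\emph{Step 1.} In \eqref{E:7}, substitute $x = \sqrt{3}\tan\phi$. Then $\sqrt{1+x^2/3} = \sec\phi$ eliminates the inner radical, and
\[
1+\tfrac{x^2}{3}+\sqrt{1+\tfrac{x^2}{3}} = \sec\phi(\sec\phi+1) = \frac{2\cos^2(\phi/2)}{\cos^2\phi}
\]
collapses the outer one into $\sqrt{2}\cos(\phi/2)/\cos\phi$. Using $1-\sqrt{3}\tan\phi = 2\cos(\phi+\pi/3)/\cos\phi$ and $dx = \sqrt{3}\sec^2\phi\,d\phi$, with upper limit $\tan\phi = 1/\sqrt{3}$ giving $\phi = \pi/6$, a short simplification yields
\[
I = \frac{\sqrt{3}}{4}\int_0^{\pi/6}\frac{d\phi}{\sqrt{\cos\phi}\,\cos(\phi/2)\sqrt{\cos(\phi+\pi/3)}}.
\]

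\emph{Step 2.} Apply the Weierstrass substitution $t = \tan(\phi/2)$; since $\tan(\pi/12) = 2-\sqrt{3}$, the new interval is $[0,2-\sqrt{3}]$. Standard formulas together with $\cos(\phi+\pi/3) = (1-t^2-2\sqrt{3}t)/[2(1+t^2)]$ give, after routine algebra,
\[
I = \frac{\sqrt{6}}{2}\int_0^{2-\sqrt{3}}\frac{\sqrt{1+t^2}\,dt}{\sqrt{(1-t^2)(1-t^2-2\sqrt{3}t)}}.
\]
Writing $k = 2-\sqrt{3}$, the quartic factors as $(1-t)(1+t)(k-t)(k^{-1}+t)$, with branch points $\pm 1, k, -k^{-1}$.

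\emph{Step 3.} Apply the M\"obius involution $x = (k-t)/(1-kt)$. It swaps $t=0$ with $x=k$ (reversing orientation on the integration interval) and sends the branch points $\{\pm 1,\,k,\,-k^{-1}\}$ to $\{\mp 1,\,0,\,2\}$, precisely those of the quartic $x(1-x^2)(2-x)$. Using $1+k^2 = 4k$ and $k^2-1 = -2\sqrt{3}k$ (both of which characterize $k=2-\sqrt{3}$), direct computation gives
\[
1-t^2 = \frac{2\sqrt{3}k(1-x^2)}{(1-kx)^2},\qquad 1-t^2-2\sqrt{3}t = \frac{4\sqrt{3}kx(2-x)}{(1-kx)^2},
\]
\[
1+t^2 = \frac{4k(1-x+x^2)}{(1-kx)^2},\qquad dt = -\frac{2\sqrt{3}k}{(1-kx)^2}dx.
\]
Substituting and using $1-kx = k(2+\sqrt{3}-x)$, all prefactors combine to $\sqrt{3}/\sqrt{k}$, yielding \eqref{E:8}.

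The main obstacle is identifying the correct M\"obius involution in Step 3: its form is not suggested by the intermediate expression. One can find it either by a cross-ratio computation matching the branch points of Step 2 with those of the target quartic while setwise preserving $[0,k]$, or by reverse-engineering from the demand that $1-x+x^2$ appear under the numerator square root: this forces $(1-kx)^2+(k-x)^2 = 4k(1-x+x^2)$, hence $1+k^2 = 4k$, i.e., $k=2-\sqrt{3}$. Once the involution is found, the remaining algebra is mechanical.
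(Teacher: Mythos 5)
Your chain of substitutions is correct: Step 1 does reduce \eqref{E:7} to $\tfrac{\sqrt3}{4}\int_0^{\pi/6}\bigl(\sqrt{\cos\phi}\,\cos(\phi/2)\sqrt{\cos(\phi+\pi/3)}\bigr)^{-1}d\phi$, the half-angle step gives $\tfrac{\sqrt6}{2}\int_0^{2-\sqrt3}\sqrt{1+t^2}\,\bigl((1-t^2)(1-t^2-2\sqrt3\,t)\bigr)^{-1/2}dt$, and the four identities in the M\"obius step all check out (they follow from $1+k^2=4k$ and $1-k^2=2\sqrt3\,k$), with the prefactors indeed collapsing to $\sqrt3/\sqrt{2-\sqrt3}$ after using $1-kx=k(2+\sqrt3-x)$. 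The difference from the paper is one of packaging rather than substance: the paper rationalizes the inner radical $\sqrt{1+x^2/3}$ in a single stroke by parametrizing the hyperbola $x^2+3=y^2$ through its rational point $(1,2)$, i.e.\ $x=(m^2-4m+1)/(1-m^2)$, and substitutes this directly into \eqref{E:7} with $m\in[0,2-\sqrt3]$. In fact the composite of your three maps $x=\sqrt3\tan\phi$, $t=\tan(\phi/2)$, $t=(k-w)/(1-kw)$ is exactly that parametrization, since $x=\frac{(k-w)(1-kw)}{k(1-w^2)}=\frac{w^2-4w+1}{1-w^2}$; so both proofs ultimately perform the same change of variables. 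What your factorization buys is motivation: each step is standard, and the branch-point matching explains where $k=2-\sqrt3$ and the endpoint $2-\sqrt3$ come from, whereas the paper's conic trick is shorter but produces the substitution ready-made; the price you pay is an extra intermediate integral and the need to discover the involution.
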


\begin{proof}
The inner root in \eqref{E:7} can be rationalized. To this end we consider 
the hyperbola $x^2+3=y^2$. Using the rational point $(x,y)=(1,2)$ we get the 
rationalization
\[x=\frac{m^2-4m+1}{1-m^2},\quad y=\frac{2m-2m^2-2}{1-m^2}.\]
There are two intervals of $m$ that maps into $0<x<1$. They are 
$[0,2-\sqrt{3}]$ and $[2,2+\sqrt{3}]$, each one gives an adequate change of coordinates 
for our integral \eqref{E:7}. We will use the first one.
We have
\[\frac{1}{\sqrt{1-x}}=\sqrt{\frac{1-m^2}{2m(2-m)}},\quad 
1+x^2/3=\frac{4}{3}\Bigl(\frac{1+m(m-1)}{(1-m^2)}\Bigr)^2.\]
We want $\sqrt{1+x^2/3}$ to be positive. When  $0<m<2-\sqrt{3}$ we have $\frac{1+m(m-1)}{(1-m^2)}>0$
therefore we obtain 
\[1+x^2/3+\sqrt{1+x^2/3}=\frac{4}{3}\Bigl(\frac{1+m(m-1)}{(1-m^2)}\Bigr)^2+
\frac{2}{\sqrt{3}}\frac{1+m(m-1)}{(1-m^2)}.\]
This can be simplified to 
\[\frac{2(2-\sqrt{3})(1-m+m^2)(m-2-\sqrt{3})^2}{3(1-m^2)^2}.\]
Finally we have 
\[dx=-\frac{4(1-m+m^2)}{(1-m^2)^2}\,dm.\]
When $m$ runs through the interval $[0,2-\sqrt{3}]$ $x$ run from $1$ to $0$. Therefore
\begin{multline*}
I=\int_0^{2-\sqrt{3}}\frac12\sqrt{\frac{1-m^2}{2m(2-m)}}
\frac{\sqrt{3}(1-m^2)}{\sqrt{2(2-\sqrt{3})(1-m+m^2)}(2+\sqrt{3}-m)}\cdot\\
\cdot\frac{4(1-m+m^2)}{(1-m^2)^2}\,dm.
\end{multline*}
Simplifying we get \eqref{E:8}.
\end{proof}

\begin{remark}
Taking the interval $m\in(2,2+\sqrt{3})$ we obtain instead of \eqref{E:8}
\[I=\frac{\sqrt{3}}{\sqrt{2+\sqrt{3}}}\int_2^{2+\sqrt{3}}
\Bigl(\frac{1-x+x^2}{x(1-x^2)(2-x)}\Bigr)^{1/2}\,\frac{dx}{x-2+\sqrt{3}}.\]
\end{remark}

\section{Expression as a logarithmic integral.}

\begin{proposition}\label{P:8}
Let 
\begin{equation}\label{E:CB}
C_0=\frac{\sqrt{21+12\sqrt{3}}}{\sqrt{2}}\log\frac{3+2\sqrt{3}}{6},\quad 
B(t)=\frac{1+10t-\sqrt{1+32t+64t^2}}{8t}.
\end{equation}
Then we have 
\begin{equation}\label{E:9}
I=C_0+
\frac{2\sqrt{3}}{\sqrt{2-\sqrt{3}}}\int_{\frac18(2+\sqrt{3})}^\infty\log
\frac{2+\sqrt{3}}{\frac32+\sqrt{3}+\sqrt{B(t)}}\frac{dt}{2\sqrt{t}}.
\end{equation}
\end{proposition}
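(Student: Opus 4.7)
The structure of \eqref{E:9}---a boundary constant plus an integral of a logarithm against $d\sqrt{t}$---strongly suggests that it arises from \eqref{E:8} by a change of variable followed by a single integration by parts. My plan is to find a substitution $x \leftrightarrow t$ that matches the two explicit pieces of the integrands, and then to integrate by parts.

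The clue is that the factor $\dfrac{1}{2+\sqrt{3}-x}$ in \eqref{E:8} must turn into $\dfrac{1}{\frac{3}{2}+\sqrt{3}+\sqrt{B(t)}}$ (the expression inside the logarithm of \eqref{E:9}). This forces the substitution
\[\sqrt{B(t)} \;=\; \tfrac{1}{2} - x,\]
under which $2+\sqrt{3}-x = \tfrac{3}{2}+\sqrt{3}+\sqrt{B(t)}$ by design. Squaring gives $B(t) = (\tfrac12-x)^2 = \tfrac14 - x + x^2$. Inverting the definition of $B$ (isolate the square root, square, and solve for $t$) produces $t = (3+4y^2)/[(1-4y^2)(9-4y^2)]$ where $y^2 = B(t)$; substituting $4y^2 = 1-4x+4x^2$ and factoring $1-4y^2 = 4x(1-x)$, $9-4y^2 = 4(1+x)(2-x)$, yields the crucial identity
\[t(x) \;=\; \frac{1-x+x^2}{4x(1-x^2)(2-x)},\]
so that $2\sqrt{t(x)}$ is \emph{exactly} the radical appearing in \eqref{E:8}. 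One further checks that $B'(t)>0$ on $(0,\infty)$ (immediate from $(1+16t)^2 > 1+32t+64t^2$), that $B(t)\to 1/4$ as $t\to\infty$, and that $B(t_0) = (2\sqrt{3}-3)^2/4$ with $t_0 = (2+\sqrt{3})/8$, so the substitution is a bijection $[0,2-\sqrt{3}]\leftrightarrow[t_0,\infty)$ sending $x=0$ to $t=\infty$ and $x=2-\sqrt{3}$ to $t=t_0$.

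Setting $L(t) := \log\dfrac{2+\sqrt{3}}{\frac{3}{2}+\sqrt{3}+\sqrt{B(t)}}$, the change of variable $dx = -d\sqrt{B(t)}$ converts \eqref{E:8} into
\[I \;=\; -\frac{2\sqrt{3}}{\sqrt{2-\sqrt{3}}}\int_{t_0}^\infty \sqrt{t}\,L'(t)\,dt,\]
because the surviving factor $(d\sqrt{B}/dt)\big/\bigl(\tfrac{3}{2}+\sqrt{3}+\sqrt{B(t)}\bigr)$ equals $-L'(t)$. Integrating by parts with $u=\sqrt{t}$, $du = dt/(2\sqrt{t})$, $v=L(t)$ produces \eqref{E:9}: the integral term is $\int v\,du$; the boundary at $\infty$ vanishes because $\sqrt{B(t)} = \tfrac12 - \tfrac{1}{8t} + O(1/t^2)$ forces $L(t)=O(1/t)$; and the boundary at $t_0$ yields, after simplification, exactly $C_0$---using $\sqrt{t_0}=(1+\sqrt{3})/4$, $L(t_0)=\log\tfrac{3+2\sqrt{3}}{6}$, $\sqrt{(2-\sqrt{3})(2+\sqrt{3})}=1$, and $\sqrt{21+12\sqrt{3}}=3+2\sqrt{3}$.

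The main obstacle is spotting the substitution $\sqrt{B(t)} = \tfrac12 - x$: it is not forced by any obvious manipulation of \eqref{E:8}, and its success depends on the nontrivial algebraic identity for $t(x)$ displayed above. Once this substitution has been guessed, the proof reduces to routine factoring, a monotonicity check for $B$, an asymptotic estimate of $L(t)$ at infinity, and one standard integration by parts.
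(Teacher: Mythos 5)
Your proof is correct, and it reaches \eqref{E:9} by a route that differs in execution from the paper's. The paper first shifts \eqref{E:8} by $x=\tfrac12-y$, writes the radical as $\sqrt{A(y)}=\int_0^{A(y)}\tfrac{dt}{2\sqrt{t}}$, and interchanges the order of integration (Iverson bracket plus Fubini); the set $\{t<A(y)\}$ is then described by $y^2>B(t)$, the $t$-range splits at $t_0=\tfrac18(2+\sqrt{3})$, and the inner $y$-integral produces the logarithm, with the piece $0<t<t_0$ supplying $C_0$. You instead invert $A$ explicitly: the substitution $\sqrt{B(t)}=\tfrac12-x$, with $t(x)=\frac{1-x+x^2}{4x(1-x^2)(2-x)}$ so that $2\sqrt{t(x)}$ is exactly the radical of \eqref{E:8}, followed by a single integration by parts, the boundary term at $t_0$ giving $C_0$ and the one at infinity vanishing because $L(t)=O(1/t)$. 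The two arguments are dual to each other (layer-cake representation plus interchange of integrals versus change of variable plus parts) and rest on the same ingredients: the quadratic in $y^2$ whose smaller root is $B(t)$, the value $t_0$ where $\sqrt{B(t_0)}=\sqrt{3}-\tfrac32$, and the algebraic identity $\frac{2\sqrt{3}}{\sqrt{2-\sqrt{3}}}\cdot\frac{1+\sqrt{3}}{4}=\frac{\sqrt{21+12\sqrt{3}}}{\sqrt{2}}$. What your version buys is that no interchange of integrals needs justification (though with a nonnegative integrand that is immediate by Tonelli) and that $C_0$ appears transparently as a boundary term; what it costs is that you must verify $B$ is strictly increasing with $B(t)\ge B(t_0)>0$ on $[t_0,\infty)$ (your reduction to $(1+16t)^2>1+32t+64t^2$ is indeed the right computation, though calling it ``immediate'' hides a short calculation with $B'$) and supply the expansion $\sqrt{B(t)}=\tfrac12-\tfrac{1}{8t}+O(t^{-2})$ to kill the boundary term at infinity --- both of which you did address, so the argument is complete.
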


\begin{proof}
The zeros of the radical in \eqref{E:8}   are symmetric with respect two lines $\Re x=\frac12$ and $\Im x=0$. 
So we change variables by $x=\frac12-y$
\[I=\frac{2\sqrt{3}}{\sqrt{2-\sqrt{3}}}\int_{\sqrt{3}-\frac32}^{\frac12}
\Bigl(\frac{(3+4y^2)}{(1-4y^2)(9-4y^2)}\Bigr)^{1/2}\frac{dy}{\frac32+\sqrt{3}+y}.\]
Let $A(y)=\frac{(3+4y^2)}{(1-4y^2)(9-4y^2)}$. Our integral is 
\[I=\frac{2\sqrt{3}}{\sqrt{2-\sqrt{3}}}\int_{\sqrt{3}-\frac32}^{\frac12}\Bigl(\int_0^{A(y)}
\frac{dt}{2\sqrt{t}}\Bigr)\frac{dy}{\frac32+\sqrt{3}+y}.\]
Following Knuth we denote by $[t<A(y)]$ a function that is $1$ when $t<A(y)$ and $0$ 
in other case, using this we have 
\[I=\frac{2\sqrt{3}}{\sqrt{2-\sqrt{3}}}\int_{\sqrt{3}-\frac32}^{\frac12}\int_{0}^{+\infty} \frac{[t<A(y)]\,dt}{2\sqrt{t}}
\frac{dy}{\frac32+\sqrt{3}+y}.\]
We reverse now the order of integration.

For any fixed value of $t>0$ there are two values of $y^2$ with $A(y)=t$, 
\[y^2=\frac{1+10t-\sqrt{1+32t+64t^2}}{8t},\quad y^2=\frac{1+10t+\sqrt{1+32t+64t^2}}{8t}.\]
We are integrating for $y\in(\sqrt{3}-\frac32,\frac12)$, so we  are only 
interested in $0<y<\frac12$. In this range of $y$ the condition $t<A(y)$ is equivalent to 
$16y^4t-(40t-4)y^2+9t-3<0$. This implies that $y^2$ is contained between the two roots 
above. The second root 
gives $y^2>10/8$, so $y>1$ for this value.
It follows that for $0<y<\frac12$ 
\[t<A(y) \quad \text{is equivalent to}\quad y^2>B(t):=\frac{1+10t-\sqrt{1+32t+64t^2}}{8t}.\]
Therefore we have 
\[I=\frac{2\sqrt{3}}{\sqrt{2-\sqrt{3}}}\int_{\sqrt{3}-\frac32}^{\frac12}\int_{0}^{+\infty}\frac{[t<A(y)]}{2\sqrt{t}}
\frac{dy}{\frac32+\sqrt{3}+y}=
\int_0^\infty\frac{dt}{2\sqrt{t}}\int_J\frac{dy}{\frac32+\sqrt{3}+y}\]
where $J$ is the interval $(\sqrt{3}-\frac32,1/2)\cap(\sqrt{B(t)},\infty)$.

$\sqrt{B(t)}$ increases with $t$, and $\sqrt{B(t)}=\sqrt{3}-\frac32$ just for 
$t=\frac18(2+\sqrt{3})$. We have also $\sqrt{B(t)}<\frac12$ for $t>\frac18(2+\sqrt{3})$  so that 
\begin{multline*}
I=\frac{2\sqrt{3}}{\sqrt{2-\sqrt{3}}}\Bigl(
\int_0^{\frac18(2+\sqrt{3})}\frac{dt}{2\sqrt{t}}\int_{\sqrt{3}-\frac32}^{\frac12}\frac{dy}{\frac32+\sqrt{3}+y}\\+
\int_{\frac18(2+\sqrt{3})}^\infty\frac{dt}{2\sqrt{t}}\int_{\sqrt{B(t)}}^{\frac12}\frac{dy}{\frac32+\sqrt{3}+y}\Bigr).
\end{multline*}
After simplification we have
\[\int_{\sqrt{3}-\frac32}^{\frac12}\frac{dy}{\frac32+\sqrt{3}+y}=\log\frac{3+2\sqrt{3}}{6},\mskip15mu\frac{2\sqrt{3}}{\sqrt{2-\sqrt{3}}}
\int_0^{\frac18(2+\sqrt{3})}\frac{dt}{2\sqrt{t}}=\sqrt{\frac{21+12\sqrt{3}}2}.\]
\[I=\frac{\sqrt{21+12\sqrt{3}}}{\sqrt{2}}\log\frac{3+2\sqrt{3}}{6}+
\frac{2\sqrt{3}}{\sqrt{2-\sqrt{3}}}\int_{\frac18(2+\sqrt{3})}^\infty\log
\frac{2+\sqrt{3}}{\frac32+\sqrt{3}+\sqrt{B(t)}}\frac{dt}{2\sqrt{t}}.\]
\end{proof}

\section{Reduction to two elliptic integrals.}

\begin{proposition}\label{P:9}
We have $I=\frac{2\sqrt{3}}{\sqrt{2-\sqrt{3}}}J$, where 
\begin{multline}\label{E:10}
J=\frac12
\int_{4}^{4(3\sqrt{3}-4)}\Bigl(\frac{8-x}{x^2-16}\Bigr)^{1/2}\frac{3+2\sqrt{3}}
{4(4+3\sqrt{3})+x}\frac{dx}{\sqrt{5-x}}\\-
\frac12
\int_{4}^{4(3\sqrt{3}-4)}\Bigl(\frac{8-x}{x^2-16}\Bigr)^{1/2}\frac{dx}
{4(4+3\sqrt{3})+x}.
\end{multline}
\end{proposition}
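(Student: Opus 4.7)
My plan is to bypass the logarithmic form \eqref{E:9} and return to the intermediate expression produced at the start of the proof of Proposition \ref{P:8}, namely
\[I = \frac{2\sqrt{3}}{\sqrt{2-\sqrt{3}}}\int_{\sqrt{3}-3/2}^{1/2}\sqrt{A(y)}\,\frac{dy}{\frac{3}{2}+\sqrt{3}+y}, \qquad A(y) = \frac{3+4y^2}{(1-4y^2)(9-4y^2)}.\]
Because $A(y)$ is even in $y$, and the factor $\frac32+\sqrt{3}+y$ is the only $y$-dependent piece that is not even, the natural first move is $u = 4y^2$ (so $y = \sqrt{u}/2$, $dy = du/(4\sqrt{u})$ and $u \in [21-12\sqrt{3},1]$). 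This sends $A$ to $(3+u)/((1-u)(9-u))$ and turns the weight $dy/(\frac32+\sqrt{3}+y)$ into $du/[2\sqrt{u}\,(3+2\sqrt{3}+\sqrt{u})]$.

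Next I would rationalize $[\sqrt{u}\,(3+2\sqrt{3}+\sqrt{u})]^{-1}$ by multiplying numerator and denominator by $(3+2\sqrt{3})-\sqrt{u}$; using $(3+2\sqrt{3})^2 = 21+12\sqrt{3}$ this gives
\[\frac{1}{\sqrt{u}\,(3+2\sqrt{3}+\sqrt{u})} = \frac{3+2\sqrt{3}}{\sqrt{u}\,(21+12\sqrt{3}-u)} - \frac{1}{21+12\sqrt{3}-u}.\]
The integral then splits into two pieces, each of which is a genuine elliptic integrand in $u$: no nested surd in the denominator remains, only a simple pole at $u = 21+12\sqrt{3}$ and the four branch points $\{-3,0,1,9\}$ of the radicand.

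Finally, the affine substitution $x = 5-u$ produces \eqref{E:10}: one checks $3+u=8-x$, $1-u=x-4$, $9-u=x+4$, $u=5-x$, and $21+12\sqrt{3}-u=4(4+3\sqrt{3})+x$, while the interval $[21-12\sqrt{3},1]$ is mapped (with $du=-dx$ absorbing the reversal of endpoints) to $[4,4(3\sqrt{3}-4)]$; matching the two pieces term by term yields $I = \frac{2\sqrt{3}}{\sqrt{2-\sqrt{3}}}J$ with $J$ as stated. The one conceptual point is that $x=5-u$ is not guessed but forced: it is the unique affine map sending the pole $u=21+12\sqrt{3}$ to $x=-4(4+3\sqrt{3})$ and the singular endpoint $u=1$ to the singular endpoint $x=4$, after which the other three branch points fall into place automatically. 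No interchange-of-limit subtlety is involved, since all integrals are over bounded intervals with only integrable endpoint singularities.
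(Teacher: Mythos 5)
Your proof is correct, and it follows a genuinely different route from the paper. The paper reaches \eqref{E:10} through the logarithmic representation of Proposition \ref{P:8}: it writes $\sqrt{A(y)}=\int_0^{A(y)}\frac{dt}{2\sqrt{t}}$, interchanges the order of integration to obtain \eqref{E:9}, then rationalizes the inner radical $\sqrt{1+32t+64t^2}$ by the birational substitution $t=\frac{m-8}{16-m^2}$, integrates by parts, splits the result into the two integrals of \eqref{E:10}, and finally verifies that the constant $C_0$ cancels against the boundary term $\frac{1+\sqrt{3}}{4}\log(4\sqrt{3}-6)$. You instead work directly on the $y$-integral obtained from \eqref{E:8} by $x=\tfrac12-y$ (legitimate, since that formula is a one-line affine change from \eqref{E:8}): the substitution $u=4y^2$ is injective because $\sqrt{3}-\tfrac32>0$, the conjugate identity
\[
\frac{1}{\sqrt{u}\,(3+2\sqrt{3}+\sqrt{u})}=\frac{3+2\sqrt{3}}{\sqrt{u}\,(21+12\sqrt{3}-u)}-\frac{1}{21+12\sqrt{3}-u}
\]
splits the integrand with no branch issues (both $3+2\sqrt{3}\pm\sqrt{u}>0$ on $u\le 1$), and the shift $x=5-u$ sends $3+u\mapsto 8-x$, $(1-u)(9-u)\mapsto x^2-16$, $u\mapsto 5-x$, $21+12\sqrt{3}-u\mapsto 4(4+3\sqrt{3})+x$, carrying $[21-12\sqrt{3},1]$ onto $[4,4(3\sqrt{3}-4)]$ with the correct orientation and coefficients, which is exactly \eqref{E:10}; the splitting is harmless since both pieces have only an integrable $(x-4)^{-1/2}$ singularity. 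What each approach buys: yours is shorter and more elementary, avoiding the interchange of integrals, the integration by parts, and the delicate cancellation of $C_0$, and it makes Proposition \ref{P:8} logically unnecessary for the main theorem; the paper's detour, on the other hand, produces the logarithmic representation \eqref{E:9} as an intermediate result of possible independent interest. Your observation that $x=5-u$ is forced by matching the pole and the singular endpoint is a nice structural justification, though strictly speaking only the verification of the algebraic identities is needed.
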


\begin{proof}
By Proposition \ref{P:8} we have $I=C_0+\frac{2\sqrt{3}}{\sqrt{2-\sqrt{3}}}U$ where 
\[U=\int_{\frac18(2+\sqrt{3})}^\infty\log
\frac{2+\sqrt{3}}{\frac32+\sqrt{3}+\sqrt{B(t)}}\frac{dt}{2\sqrt{t}}.\]
$\sqrt{B(t)}$ contains the inner root $\sqrt{1+32t+64t^2}$. We rationalize this 
square root by the change
\[t=\frac{m-8}{16-m^2},\qquad  1+32t+64t^2=\Bigl(\frac{16-16m+m^2}{16-m^2}\Bigr)^2.\]
The value of $\frac{m-8}{16-m^2}$ run through the interval $(\frac18(2+\sqrt{3}),\infty)$
for two intervals of $m$. First for $m\in(-4\sqrt{3},-4)$, and second for $m\in(4,4(3\sqrt{3}-4))$ (this time in reverse order).
The square root $\sqrt{B(t)}$ is different in the two intervals. We obtain
\begin{align*}
B\Bigl(\frac{m-8}{16-m^2}\Bigr)&=\frac{5-m}{4},\quad 4<m<4(3\sqrt{3}-4);\\
B\Bigl(\frac{m-8}{16-m^2}\Bigr)&=\frac{3(8+m)}{4(8-m)},\quad-4\sqrt{3}<m<-4.
\end{align*}
Therefore
\[U=-\int_{4}^{4(3\sqrt{3}-4)}\log
\frac{2+\sqrt{3}}{\frac32+\sqrt{3}+\frac12\sqrt{5-m}}\,d\Bigl(\frac{8-m}{m^2-16}\Bigr)^{1/2}.\]
This can be written
\[U=\int_{4}^{4(3\sqrt{3}-4)}\log\Bigl(1+\frac{2-\sqrt{3}}{2}(\sqrt{5-m}-1)\Bigr)
\,d\Bigl(\frac{8-m}{m^2-16}\Bigr)^{1/2}.\]
Integrating by parts
\begin{multline*}
U=\frac{1+\sqrt{3}}{4}\log(4\sqrt{3}-6)\\+\int_{4}^{4(3\sqrt{3}-4)}\Bigl(\frac{8-m}{m^2-16}\Bigr)^{1/2}\frac{1}{1+\frac{2-\sqrt{3}}{2}(\sqrt{5-m}-1)} \frac{2-\sqrt{3}}{2}
\frac{1}{2\sqrt{5-m}}\,dm,
\end{multline*}
simplifying and changing the name of the variable
\begin{multline*}
U=\frac{1+\sqrt{3}}{4}\log(4\sqrt{3}-6)\\+\frac12
\int_{4}^{4(3\sqrt{3}-4)}\Bigl(\frac{8-x}{x^2-16}\Bigr)^{1/2}\frac{1}{3+2\sqrt{3}+\sqrt{5-x}} \frac{1}{\sqrt{5-x}}\,dx.
\end{multline*}
This integral is the sum of other two
\begin{align*}
U&=\frac{1+\sqrt{3}}{4}\log(4\sqrt{3}-6)\\
&\mskip60mu+\frac12
\int_{4}^{4(3\sqrt{3}-4)}\Bigl(\frac{8-x}{x^2-16}\Bigr)^{1/2}\frac{3+2\sqrt{3}}
{4(4+3\sqrt{3})+x}\frac{1}{\sqrt{5-x}}\,dx\\
&\mskip120mu-
\frac12
\int_{4}^{4(3\sqrt{3}-4)}\Bigl(\frac{8-x}{x^2-16}\Bigr)^{1/2}\frac{1}
{4(4+3\sqrt{3})+x}\,dx,\\
&:=\frac{1+\sqrt{3}}{4}\log(4\sqrt{3}-6)+J,
\end{align*}
(notice that  $4(4+3\sqrt{3})=(3+2\sqrt{3})^2-5$).

We have proved that $I=C+\frac{2\sqrt{3}}{\sqrt{2-\sqrt{3}}}J$ where 
\[C=C_0+ \frac{2\sqrt{3}}{\sqrt{2-\sqrt{3}}}\frac{1+\sqrt{3}}{4}\log(4\sqrt{3}-6).\]
But this constant is equal to $0$. 
To prove it notice first that 
\[\log\frac{3+2\sqrt{3}}{6}+\log(4\sqrt{3}-6)=\log 1=0.\]
Then we only have to check that 
\[\frac{\sqrt{21+12\sqrt{3}}}{\sqrt{2}}=\frac{2\sqrt{3}}{\sqrt{2-\sqrt{3}}}\frac{1+\sqrt{3}}{4}.\]
This is equivalent to 
$\frac{4\sqrt{6+3\sqrt{3}}}
{2\sqrt{2}(3+\sqrt{3})}=1$, that is checked by squaring.
\end{proof}

\section{Reduction to normal form (modulus $k=2-\sqrt{3}$).}

The integrals in Proposition \ref{P:9} are elliptic integrals. That is integrals 
$\int R(x,y)\,dx$ where $R$ is rational and $y=\sqrt{p(x)}$ where $p(x)$ is a polynomial
of degree $3$ or $4$ without multiple roots. It is known that these integrals can be 
reduced to three canonical forms. 

Our  two elliptic integrals are in fact corresponding to the same modulus $k=2-\sqrt{3}$.

\begin{proposition}\label{P:firstform}
The integral $I=aJ_1+bJ_2$ with 
\begin{equation}
\begin{aligned}
J_1&=\int_{\frac{1+\sqrt{3}}{2}}^{2+\sqrt{3}}\frac{x+1}{x+1+\sqrt{3}}\frac{dx}{\sqrt{(x^2-1)(1-k^2x^2)}},
\\
J_2&=\int_1^{\frac{1+\sqrt{3}}{2}}
\frac{x-2-\sqrt{3}}{x+1+\sqrt{3}}\frac{dx}{\sqrt{(x^2-1)(1-k^2x^2)}},
\end{aligned}
\end{equation}
where $k=2-\sqrt{3}$, $a$ and
$b$ the algebraic numbers
\begin{equation}
a =\frac{\sqrt{3}}{2\sqrt{2}}, \quad b=\frac{2\sqrt{3}-3}{2\sqrt{2}}.
\end{equation}
\end{proposition}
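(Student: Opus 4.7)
The plan is to reduce each of the two elliptic integrals comprising $J$ in Proposition \ref{P:9} to Legendre normal form via an appropriate Möbius change of variable, using a different Möbius for each of the two integrals.

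The first integrand in $J$ has radical $\sqrt{(8-x)(5-x)(x^2-16)}$, a quartic with roots $\{-4,4,5,8\}$. The cross-ratio $(5,8;-4,4)=3$ equals the cross-ratio $(1,-1;-1/k,1/k)=3$ with $1/k=2+\sqrt{3}$, which guarantees the existence of a real Möbius $\phi$ sending $\{-4,4,5,8\}\to\{-1/k,1/k,1,-1\}$. A three-point interpolation yields explicitly
\[
\phi(x)=(2+\sqrt{3})\,\frac{(1-3\sqrt{3})x+4(1+3\sqrt{3})}{(1+3\sqrt{3})x+4(1-3\sqrt{3})}.
\]
Direct computation confirms $\phi(4)=1/k$, $\phi(12\sqrt{3}-16)=(1+\sqrt{3})/2$, and $\phi(8)=-1$, so the $x$-interval $[4,12\sqrt{3}-16]$ maps (with reversed orientation) onto the $J_1$-interval $[(1+\sqrt{3})/2,1/k]$; moreover $\phi(-4(4+3\sqrt{3}))=-1-\sqrt{3}$, matching the pole in the denominator $(x+1+\sqrt{3})$ of $J_1$.

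The second integrand in $J$ has the cubic radical $\sqrt{(8-x)(x^2-16)}$. Regarding this cubic as a quartic with a root at infinity, I look for a Möbius $\psi$ sending $\{-4,4,8,\infty\}\to\{-1,1,1/k,-1/k\}$. Imposing $\psi(\pm 4)=\pm 1$ and $\psi(\infty)=-1/k$ leads to the pleasingly simple map
\[
\psi(x)=\frac{x-4k}{4-kx},
\]
and one then checks $\psi(4)=1$, $\psi(12\sqrt{3}-16)=(1+\sqrt{3})/2$, $\psi(8)=1/k$, so the $x$-interval is now mapped onto the $J_2$-interval $[1,(1+\sqrt{3})/2]$, and the same pole again maps to $-1-\sqrt{3}$.

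What remains is mechanical computation. Inverting $\psi$ gives $x=4(t+k)/(1+kt)$, and the explicit factorizations
\[
8-x=\frac{4\sqrt{3}(1-kt)}{1+kt},\quad x^2-16=\frac{32(2\sqrt{3}-3)(t^2-1)}{(1+kt)^2},\quad 4(4+3\sqrt{3})+x=\frac{8\sqrt{3}(1+\sqrt{3}+t)}{1+kt},
\]
together with $dx=8(2\sqrt{3}-3)(1+kt)^{-2}\,dt$ and the identity $1-kt=-k(t-1/k)$, collapse the second integral of $J$ to $(\sqrt{2}\,k^{3/2}/8)\,J_2$. A parallel but longer reduction using $\phi$ — expressing each linear factor $(x-a)$ via $(x-a)=(Cx+D)(Ca+D)(t-\phi(a))/(AD-BC)$ and the identity $Cx+D=(AD-BC)/(A-Ct)$ — handles the first integral and yields $(\sqrt{2k}/8)\,J_1$. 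Multiplying through by the prefactor $2\sqrt{3}/\sqrt{k}$ from Proposition \ref{P:9} produces $I=aJ_1+bJ_2$ with the stated $a=\sqrt{3}/(2\sqrt{2})$ and $b=(2\sqrt{3}-3)/(2\sqrt{2})$, the ratio $a/b=1/k$ serving as a useful internal consistency check. The principal obstacle is the $\phi$-reduction: four linear factors transform each with its own constant, the Jacobian must be inserted, and signs of square roots on the integration interval must be chosen correctly for everything to combine into a clean multiple of $\sqrt{(t^2-1)(1-k^2t^2)}$ with the precise constant $\sqrt{2k}/8$.
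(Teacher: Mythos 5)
Your proposal is correct and follows essentially the same route as the paper: each integral in Proposition \ref{P:9} is put into Legendre form by a fractional--linear substitution matching the branch points to $\pm 1,\pm 1/k$, and your single maps $\phi$ and $\psi$ are exactly the compositions of the paper's two-step substitutions ($x=L(t)$ followed by $t=-1/(kx)$, respectively $t=-x$), with your constants $\sqrt{2k}/8$ and $\sqrt{2}\,k^{3/2}/8$ agreeing with the paper's $a,b$ after multiplying by $2\sqrt{3}/\sqrt{2-\sqrt{3}}$. One wording caveat: ``the second integral of $J$'' must be understood as the signed term in \eqref{E:10}, i.e.\ the second displayed integral there (which is positive) equals $-\frac{\sqrt{2}\,k^{3/2}}{8}J_2$ with $J_2<0$, and it is the term carrying the minus sign in $J$ that equals $+\frac{\sqrt{2}\,k^{3/2}}{8}J_2$; with that reading your sign bookkeeping and the conclusion $I=aJ_1+bJ_2$ are exactly right.
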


\begin{proof}
Consider the first integral in \eqref{E:10}
\[H_1=\frac12
\int_{4}^{4(-4+3\sqrt{3})}\Bigl(\frac{8-x}{x^2-16}\Bigr)^{1/2}\frac{3+2\sqrt{3}}
{4(4+3\sqrt{3})+x}\frac{1}{\sqrt{5-x}}\,dx.\]
We reduce the radical to the usual form. Following the general theory, we change variables by means of the bilinear transformation
\[x=L(t):=\frac{4}{13}(14+3\sqrt{3})\frac{t-\frac{14-3\sqrt{3}}{13}}
{t-\frac{14+3\sqrt{3}}{13}}\]
$L$ send the points $-1/k$, $-1$, $1$ and $1/k$ into $5$, $4$, $-4$, $8$   respectively, where
$k=2-\sqrt{3}$ and 
\[(x^2-16)(8-x)(5-x)\to -(2-\sqrt{3})\frac{2^7 3^6(1-t^2)(1-k^2t^2)}{(3-4\sqrt{3}+(5\sqrt{3}-6)t)^4}.\]
The interval of integration $4<x<4(-4+3\sqrt{3})$ corresponds to $-1>t>-1-\sqrt{3}$.
\[\frac{(3+2\sqrt{3})(8-x)}
{4(4+3\sqrt{3})+x}=\frac{3-\sqrt{3}}{4}\frac{t-2-\sqrt{3}}{t-\frac12(1+\sqrt{3})}.\]
\[dx=-\frac{24(9+14\sqrt{3})}{(13t-(14+3\sqrt{3}))^2}\,dt.\]
After simplification we get 
\[H_1=\frac{11\sqrt{3}-5}{8\sqrt{2}\sqrt{698+391\sqrt{3}}}\int_{-1-\sqrt{3}}^{-1}\frac{1}
{\sqrt{(t^2-1)(1-k^2t^2)}}
\frac{(2+\sqrt{3}-t)}{(\frac12(1+\sqrt{3})-t)}\,dt.\]
Therefore  the coefficient of $\frac{2\sqrt{3}}{\sqrt{2-\sqrt{3}}}H_1$ is equal to 
\[\frac{2\sqrt{3}}{\sqrt{2-\sqrt{3}}}\cdot\frac{11\sqrt{3}-5}{8\sqrt{2}\sqrt{698+391\sqrt{3}}}=\frac{3-\sqrt{3}}{4\sqrt{2}}.\]
That is easily checked after we notice that $\sqrt{2-\sqrt{3}}\sqrt{698+391\sqrt{3}}=14+3\sqrt{3}$.

Changing variables $t= -\frac{1}{kx}$ we obtain 
\[\frac{2\sqrt{3}}{\sqrt{2-\sqrt{3}}}H_1=\frac{\sqrt{3}}{2\sqrt{2}}
\int_{\frac{1+\sqrt{3}}{2}}^{2+\sqrt{3}}\frac{x+1}{x+1+\sqrt{3}}\frac{dx}{\sqrt{(x^2-1)(1-k^2x^2)}}.\]

The second integral in \eqref{E:10} is 
\[H_2=\frac12
\int_{4}^{4(3\sqrt{3}-4)}\Bigl(\frac{8-x}{x^2-16}\Bigr)^{1/2}\frac{1}
{4(4+3\sqrt{3})+x}\,dx.\]
The change of variables
\[x=L(t):=4(2+\sqrt{3})\frac{t-2+\sqrt{3}}{t-2-\sqrt{3}},\]
sends $-1/k$, $-1$, $1$, $1/k$ into $8$, $4$, $-4$ and $\infty$. We have also
for $4<x<4(3\sqrt{3}-4)$
\[\sqrt{(x^2-16)(8-x)}=\sqrt{(t^2-1)(1-k^2t^2)}\frac{8(9+5\sqrt{3})}{(t-2-\sqrt{3})^2},\]
\[\frac{8-x}{4(4+3\sqrt{3})+x}\frac{(t-2-\sqrt{3})^2}{8(9+5\sqrt{3})}L'(t)=
\frac{3\sqrt{3}-5}{4}\frac{t+2+\sqrt{3}}{t-1-\sqrt{3}},\]
\[L(-1)=4,\qquad L(-(1+\sqrt{3})/2)=4(3\sqrt{3}-4).\] 
Therefore after simplifications we get 
\[H_2=-\frac{3\sqrt{3}-5}{8}\int_{-\frac{1+\sqrt{3}}{2}}^{-1}
\frac{t+2+\sqrt{3}}{t-1-\sqrt{3}}\frac{dt}{\sqrt{-(1-t^2)(1-k^2t^2)}}.\]
The coefficient here by the factor of $J$ in Proposition \ref{P:9} is
\[b=\frac{2\sqrt{3}}{\sqrt{2-\sqrt{3}}}\cdot\frac{3\sqrt{3}-5}{8}=\frac{2\sqrt{3}-3}{2\sqrt{2}}.\]
Changing  variables  $t=-x$ yields
\[\frac{2\sqrt{3}}{\sqrt{2-\sqrt{3}}}H_2=
-\frac{2\sqrt{3}-3}{2\sqrt{2}}\int_1^{\frac{1+\sqrt{3}}{2}}
\frac{x-2-\sqrt{3}}{x+1+\sqrt{3}}\frac{dx}{\sqrt{(x^2-1)(1-k^2x^2)}},\]
and we get our result $I=aJ_1+bJ_2$.
\end{proof}

\begin{theorem}\label{T:dos}
Let $k=2-\sqrt{3}$, $a=(1+\sqrt{3})/2=\frac{1}{1-k}$ and $\Delta=(x^2-1)(1-k^2x^2)$ then 
\begin{equation}\label{E:threeintegrals}
2\sqrt{2}\;I=\sqrt{3}\int_1^{1/k}\frac{dx}{\sqrt{\Delta}}+
(\sqrt{3}-3)\int_1^a \frac{dx}{\sqrt{\Delta}}-3
\int_1^{1/k}\frac{1}{x+1+\sqrt{3}}\frac{dx}{\sqrt{\Delta}}.
\end{equation}
\end{theorem}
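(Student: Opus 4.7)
The plan is to derive Theorem \ref{T:dos} by direct algebraic manipulation of the expression $I=a_0 J_1+b_0 J_2$ supplied by Proposition \ref{P:firstform}, where I write $a_0=\sqrt{3}/(2\sqrt{2})$ and $b_0=(2\sqrt{3}-3)/(2\sqrt{2})$ to avoid clashing with the $a=(1+\sqrt{3})/2$ of the theorem. Multiplying through by $2\sqrt{2}$ converts this to $2\sqrt{2}\,I=\sqrt{3}\,J_1+(2\sqrt{3}-3)\,J_2$, so the whole argument reduces to a partial-fraction splitting of the rational factors inside $J_1$ and $J_2$ together with the additivity $[1,a]\cup[a,1/k]=[1,1/k]$.

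First I would split the two rational factors as
\[\frac{x+1}{x+1+\sqrt{3}}=1-\frac{\sqrt{3}}{x+1+\sqrt{3}},\qquad \frac{x-2-\sqrt{3}}{x+1+\sqrt{3}}=1-\frac{3+2\sqrt{3}}{x+1+\sqrt{3}}.\]
Writing $A=\int_a^{1/k}\frac{dx}{\sqrt{\Delta}}$, $A'=\int_1^a\frac{dx}{\sqrt{\Delta}}$, and $B,B'$ for the same intervals with an extra factor $1/(x+1+\sqrt{3})$, this splitting yields $J_1=A-\sqrt{3}\,B$ and $J_2=A'-(3+2\sqrt{3})B'$. Substituting gives
\[2\sqrt{2}\,I=\sqrt{3}\,A-3B+(2\sqrt{3}-3)A'-(2\sqrt{3}-3)(3+2\sqrt{3})B'.\]

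The decisive observation is the algebraic identity $(2\sqrt{3}-3)(3+2\sqrt{3})=12-9=3$, which makes the coefficients of $B$ and $B'$ both equal $-3$, so that $B+B'$ merges into the single integral $\int_1^{1/k}\frac{dx}{(x+1+\sqrt{3})\sqrt{\Delta}}$ appearing as the third integral of \eqref{E:threeintegrals}. Rewriting the remaining part as $\sqrt{3}\,A+(2\sqrt{3}-3)A'=\sqrt{3}(A+A')+(\sqrt{3}-3)A'$ then supplies exactly the first two integrals of the theorem. There are no analytic subtleties here; the only ``obstacle'' is noticing the small numerical coincidence $(2\sqrt{3}-3)(3+2\sqrt{3})=3$ that forces the two weighted pieces to combine into one integral over the full interval $[1,1/k]$.
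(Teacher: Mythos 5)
Your proposal is correct and is essentially the paper's own argument: both start from $I=a_0J_1+b_0J_2$ of Proposition \ref{P:firstform} and reduce the claim to elementary partial-fraction splittings of the factors $\frac{x+1}{x+1+\sqrt{3}}$ and $\frac{x-2-\sqrt{3}}{x+1+\sqrt{3}}$ combined with additivity of the integration intervals; the paper merely orders the steps differently (extending $J_1$ to $[1,1/k]$ first and observing that the leftover numerator on $[1,a]$ is $(\sqrt{3}-3)(x+1+\sqrt{3})$), which is equivalent to your identity $(2\sqrt{3}-3)(3+2\sqrt{3})=3$. No gaps; the computation checks out.
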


\begin{proof}
By Proposition \ref{P:firstform} we have
\begin{align*}
2\sqrt{2}I&=\int_a^{1/k}\frac{\sqrt{3}(x+1)}{x+1+\sqrt{3}}\frac{dx}{\sqrt{\Delta}}+
\int_1^a \frac{(2\sqrt{3}-3)x-\sqrt{3}}{x+1+\sqrt{3}}\frac{dx}{\sqrt{\Delta}}\\
&=\int_1^{1/k}\frac{\sqrt{3}(x+1)}{x+1+\sqrt{3}}\frac{dx}{\sqrt{\Delta}}+
\int_1^a \frac{(\sqrt{3}-3)x-2\sqrt{3}}{x+1+\sqrt{3}}\frac{dx}{\sqrt{\Delta}}\\
&=\int_1^{1/k}\frac{\sqrt{3}(x+1)}{x+1+\sqrt{3}}\frac{dx}{\sqrt{\Delta}}+
(\sqrt{3}-3)\int_1^a \frac{dx}{\sqrt{\Delta}}\\
&=\sqrt{3}\int_1^{1/k}\frac{dx}{\sqrt{\Delta}}-
\int_1^{1/k}\frac{3}{x+1+\sqrt{3}}\frac{dx}{\sqrt{\Delta}}+
(\sqrt{3}-3)\int_1^a \frac{dx}{\sqrt{\Delta}}.\\
\end{align*}
\end{proof}

\section{Reduction to normal form with modulus $1/\sqrt{3}$}

We will use the notation of Gradshteyn and Ryzhik for the three normal forms of elliptic integrals.  They are 
\begin{multline*}
\Pi(\varphi,n^2,k)=\int_0^\varphi\frac{da}{(1-n^2\sin^2a)\sqrt{1-k^2\sin^2a}}\\=
\int_0^{\sin\varphi}\frac{dx}{(1-n^2 x^2)\sqrt{(1-x^2)(1-k^2x^2)}},\qquad 
(-\infty<n^2<\infty).
\end{multline*}
\[ F(\varphi,k)=\int_0^\varphi\frac{d\alpha}{\sqrt{1-k^2\sin^2\alpha}}=
\int_0^{\sin\varphi}\frac{dx}{\sqrt{(1-x^2)(1-k^2x^2)}}.\]
\[ E(\varphi,k)=\int_0^\varphi\sqrt{1-k^2\sin^2\alpha}\,d\alpha=
\int_0^{\sin\varphi}\frac{\sqrt{1-k^2 x^2}}{\sqrt{1-x^2}}\,dx.\]
Here $k$ is the modulus. 

The integrals appearing in Theorem \ref{T:dos} are real but the integrand contains
$\sqrt{\Delta(x)}$ with $\Delta(x)=(x^2-1)(1-k^2x^2)$ by a change of variables we may 
consider real integrals with the standard form of the radicand $(1-x^2)(1-\ell^2x^2)$.
In our case $\ell^2$ will be $1/3$.  All this can be done by standard transformations. In particular
I find very useful Byrd and Friedman \cite{BF} Handbook of Elliptic Integrals.

The first integral  in \eqref{E:threeintegrals} is well known in \cite{WW}*{p.~501} we find
\[\int_1^{1/k}\frac{dx}{\sqrt{\Delta}}=K'=K(k'),\]
where the complementary modulus $k'$ is defined by $k^2+k'^2=1$. 
For the other two integrals we use the tables of Byrd and Friedman.

\begin{proposition}\label{P:valuesInt}
Let $k=2-\sqrt{3}$, we have 
\begin{equation}
\int_1^a \frac{dx}{\sqrt{\Delta}}=\frac{3+\sqrt{3}}{3}\int_0^{\sqrt{k}}\frac{dx}{\sqrt{(1-x^2)(1-\frac13x^2)}}.
\end{equation}
\begin{multline}
\int_1^{1/k}\frac{1}{x+1+\sqrt{3}}\frac{dx}{\sqrt{\Delta}}\\=
\frac{2(1+\sqrt{3})}{3}\int_0^1\frac{dx}{(1-\frac{k}{3}x^2)\sqrt{(1-x^2)(1-\frac13 x^2)}}-
\frac{3+\sqrt{3}}{3}K(1/\sqrt{3}).
\end{multline}
\end{proposition}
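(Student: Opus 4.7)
My plan is to reduce both integrals to Legendre normal form (modulus $1/\sqrt{3}$) by the single rational substitution
\[
x = \frac{\sqrt{3}+u^{2}}{\sqrt{3}-u^{2}}.
\]
This is the natural candidate: up to normalisation it is the unique M\"obius-in-$u^{2}$ map sending $u=0,\sqrt{k},1$ to $x=1,a,1/k$ respectively, and it matches the ramification locus $\{\pm 1,\pm 1/k\}$ of $\sqrt{\Delta}$ with the locus $\{\pm 1,\pm\sqrt{3}\}$ of the target radicand. So $u\in[0,\sqrt{k}]$ covers the first integration range and $u\in[0,1]$ covers the second.

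The Jacobian follows by a direct expansion. One finds $x^{2}-1 = 4\sqrt{3}\,u^{2}/(\sqrt{3}-u^{2})^{2}$, $dx = 4\sqrt{3}\,u\,du/(\sqrt{3}-u^{2})^{2}$, and expanding $(\sqrt{3}-u^{2})^{2} - k^{2}(\sqrt{3}+u^{2})^{2}$ then invoking the algebraic identity $\sqrt{3}(1+k^{2}) = 2(1-k^{2})$ (a rephrasing of $k=2-\sqrt{3}$) yields
\[
1-k^{2}x^{2} = \frac{(1-k^{2})(1-u^{2})(3-u^{2})}{(\sqrt{3}-u^{2})^{2}}.
\]
Combined with $\sqrt{2k}=\sqrt{3}-1$ this gives $dx/\sqrt{\Delta} = \tfrac{3+\sqrt{3}}{3}\,du/\sqrt{(1-u^{2})(1-u^{2}/3)}$, which establishes the first identity on integration over $u\in[0,\sqrt{k}]$.

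For the second identity the same substitution gives $x+1+\sqrt{3} = \sqrt{3}(2+\sqrt{3}-u^{2})/(\sqrt{3}-u^{2})$, so the extra factor becomes $(\sqrt{3}-u^{2})/[\sqrt{3}(2+\sqrt{3}-u^{2})]$. Partial fractions split this as $\tfrac{1}{\sqrt{3}} - \tfrac{2}{\sqrt{3}(2+\sqrt{3}-u^{2})}$, and since $2+\sqrt{3}-u^{2} = (2+\sqrt{3})(1-ku^{2})$ (using $k(2+\sqrt{3})=1$), integration over $u\in[0,1]$ produces an explicit linear combination of $K(1/\sqrt{3})$ and $\Pi(\pi/2,k,1/\sqrt{3})$.

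The main obstacle is that this direct route yields a $\Pi$ with characteristic $k$, whereas the proposition uses $k/3$. The two forms are equivalent via the identity
\[
(2+\sqrt{3})\,K(1/\sqrt{3}) = (1+\sqrt{3})\,\Pi(\pi/2,k/3,1/\sqrt{3}) + (\sqrt{3}-1)\,\Pi(\pi/2,k,1/\sqrt{3}),
\]
which, after combining the three integrands over the common denominator $(1-ku^{2})(1-(k/3)u^{2})$ and using $k(2+\sqrt{3})=1$ and $k^{2}(2+\sqrt{3})=k$, reduces to the vanishing of $\int_{0}^{1}(u^{4}-6u^{2}+3)/[(1-ku^{2})(1-(k/3)u^{2})\sqrt{(1-u^{2})(1-u^{2}/3)}]\,du$. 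I would establish this either by exhibiting an elementary antiderivative of the form $P(u)\sqrt{(1-u^{2})(1-u^{2}/3)}/Q(u)$, or by replacing the substitution above with a Byrd--Friedman reduction tailored to the pole at $x=-(1+\sqrt{3})$, which presumably is the author's route and produces characteristic $k/3$ directly.
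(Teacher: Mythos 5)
Your substitution $x=(\sqrt{3}+u^{2})/(\sqrt{3}-u^{2})$ is correct and the algebra you quote checks out: $x^{2}-1=4\sqrt{3}u^{2}/(\sqrt{3}-u^{2})^{2}$, $1-k^{2}x^{2}=(1-k^{2})(1-u^{2})(3-u^{2})/(\sqrt{3}-u^{2})^{2}$ (using $\sqrt{3}(1+k^{2})=2(1-k^{2})$), and with $1-k^{2}=2\sqrt{3}k$, $\sqrt{2k}=\sqrt{3}-1$ one indeed gets $dx/\sqrt{\Delta}=\tfrac{3+\sqrt{3}}{3}\,du/\sqrt{(1-u^{2})(1-u^{2}/3)}$, which proves the first identity, while the partial-fraction step with $k(2+\sqrt{3})=1$ gives
\[
\int_1^{1/k}\frac{1}{x+1+\sqrt{3}}\frac{dx}{\sqrt{\Delta}}
=\frac{1+\sqrt{3}}{3}K(1/\sqrt{3})-\frac{2(\sqrt{3}-1)}{3}\,\Pi\bigl(\tfrac{\pi}{2},k,1/\sqrt{3}\bigr).
\]
This is a genuinely different route from the paper, which reduces the two integrals separately through the Byrd--Friedman tables (\textbf{256.00}, then \textbf{256.39} with $m=1$ and the primitive \textbf{340.01}); your single explicit substitution is self-contained and treats both integrals uniformly. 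Note, however, that the paper's own proof terminates in exactly the characteristic-$k$ expression displayed above, and it is that expression (not the displayed $k/3$ form) which is substituted into \eqref{E:threeintegrals} to prove the final theorem; so your guess that the author's reduction produces characteristic $k/3$ directly is not what happens, and your computation in fact reproduces everything the paper actually derives and uses.

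The one genuine gap is the passage to the $k/3$ form as literally stated. The identity you write, $(2+\sqrt{3})K(1/\sqrt{3})=(1+\sqrt{3})\Pi(\pi/2,k/3,1/\sqrt{3})+(\sqrt{3}-1)\Pi(\pi/2,k,1/\sqrt{3})$, is correct, but your first proposed proof of it cannot succeed: the differential $(u^{4}-6u^{2}+3)\,du/[(1-ku^{2})(1-\tfrac{k}{3}u^{2})\sqrt{(1-u^{2})(1-u^{2}/3)}]$ has nonzero residues at its poles (at $u^{2}=1/k$ the numerator equals $(1-6k+3k^{2})/k^{2}$ with $1-6k+3k^{2}=10-6\sqrt{3}\neq 0$, and similarly at $u^{2}=3/k$), so it is a differential of the third kind and cannot be the derivative of any $P(u)\sqrt{(1-u^{2})(1-u^{2}/3)}/Q(u)$; the vanishing of its integral over $[0,1]$ is not an exactness phenomenon. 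What does work, in one line, is the involution $u^{2}\mapsto(1-u^{2})/(1-u^{2}/3)$ of $[0,1]$ (the $t\mapsto K-t$ symmetry for modulus $1/\sqrt{3}$), which preserves $du/\sqrt{(1-u^{2})(1-u^{2}/3)}$ up to orientation and sends $1/(1-ku^{2})$ to $\tfrac{1}{\sqrt{3}-1}\cdot\tfrac{1-u^{2}/3}{1-\frac{k}{3}u^{2}}$; splitting off the constant part yields $(\sqrt{3}-1)\Pi(\pi/2,k,1/\sqrt{3})=(2+\sqrt{3})K(1/\sqrt{3})-(1+\sqrt{3})\Pi(\pi/2,k/3,1/\sqrt{3})$, which is exactly your identity. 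With that inserted (or by simply leaving the answer in the characteristic-$k$ form, which is all the paper itself uses downstream), your proof is complete.
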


\begin{proof}
According to \cite{BF}*{\textbf{256.00} p.~120} we have
\[
\int_1^a \frac{dx}{\sqrt{\Delta}}=\frac{1}{k}\int_1^a\frac{dx}
{\sqrt{(\frac1k-x)(x-1)(x+1)(x+\frac1k)}}
=\frac{g}{k} F(\varphi,k_1),
\]
where
\[k_1^2=\frac{(\frac1k-1)(\frac1k-1)}{(\frac1k+1)(\frac1k+1)}\quad\therefore\quad k_1=\frac{1}{\sqrt{3}}\]
\[\varphi=\arcsin \sqrt{\frac{(\frac1k+1)(a-1)}{(\frac1k-1)(a+1)}}
\quad\therefore\quad \varphi=\arcsin\sqrt{k}.\]
\[g=\frac{2}{\sqrt{(\frac1k+1)(1+\frac1k)}}\quad\therefore\quad g=\frac{3-\sqrt{3}}{3},
\quad \frac{g}{k}=\frac{3+\sqrt{3}}{3}.\]
In the same way \cite{BF}*{\textbf{256.39} p.~124} with $m=1$ gives
\[\int_1^{1/k}\frac{1}{x+1+\sqrt{3}}\frac{dx}{\sqrt{\Delta}}=\frac{g}{(2+\sqrt{3})k}
\int_0^{u_1}\frac{1-\alpha^2\sn^2 u}{1-\alpha_3^2\sn^2u}\,du\]
where the modulus of the function $\sn u$ is $k_1=3^{-1/2}$,  $g$ is given above,   $\sn u_1=\sin\varphi$,  $\alpha^2=\frac{\frac1k-1}{\frac1k+1}=\frac{1}{\sqrt{3}}$,
\[\varphi=\arcsin\sqrt{\frac{(\frac1k+1)(\frac1k-1)}{(\frac1k-1)(\frac1k+1)}}=\arcsin1=\frac{\pi}{2},\quad \sn u_1=1, \quad\therefore\quad u_1=K(k_1).\]
and 
\[\alpha_3^2= \frac{(-1-\sqrt{3}+1)(\frac1k-1)}{(-1-\sqrt{3}-1)(\frac1k+1)},
\quad\therefore\quad \alpha_3^2=2-\sqrt{3}.\] 
The primitive is given in \cite{BF}*{\textbf{340.01} p.~205}. It follows that 
\[\int_1^{1/k}\frac{1}{x+1+\sqrt{3}}\frac{dx}{\sqrt{\Delta}}=
\frac{3-\sqrt{3}}{3}\frac{1}{2-\sqrt{3}}\Bigl[(2-\sqrt{3}-\tfrac{1}{\sqrt{3}})
\Pi(\tfrac{\pi}{2},2-\sqrt{3},k_1)+\tfrac{1}{\sqrt{3}}u_1\Bigr].\]
So that 
\[\int_1^{1/k}\frac{1}{x+1+\sqrt{3}}\frac{dx}{\sqrt{\Delta}}=
\frac{1+\sqrt{3}}{3}K(1/\sqrt{3})-\frac{2(\sqrt{3}-1)}{3}\Pi(\tfrac{\pi}{2}, k, 1/\sqrt{3}).\]
\end{proof}

We state now our main result.
\begin{theorem}
We have with $k=2-\sqrt{3}$ 
\begin{equation}\label{E:final}
\sqrt{2}\;I=(\sqrt{3}-1)
\Pi(\tfrac{\pi}{2}, k, 3^{-1/2})-F(\alpha,3^{-1/2}),
\end{equation}
where $\alpha=\arcsin\sqrt{k}$. 
\end{theorem}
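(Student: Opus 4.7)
The plan is to perform a direct substitution of the three evaluations supplied by Proposition \ref{P:valuesInt} into the decomposition of $2\sqrt{2}\,I$ given by \eqref{E:threeintegrals}, and then simplify the resulting linear combination of elliptic integrals.

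The one preliminary step is to express $\int_1^{1/k} dx/\sqrt{\Delta}$ in terms of the modulus $1/\sqrt{3}$. This is the specialization of the first identity of Proposition \ref{P:valuesInt} to upper limit $a=1/k$: the argument of the $F$-function then becomes $\arcsin 1 = \pi/2$, so
\[
\int_1^{1/k}\frac{dx}{\sqrt{\Delta}} = \frac{3+\sqrt{3}}{3}\,K(1/\sqrt{3}).
\]
(Combined with the identification of the same integral with $K(k')$, $k'=\sqrt{1-k^2}$, this amounts to a Landen transformation, but we do not need that interpretation here.)

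Next, substitute all three evaluations into \eqref{E:threeintegrals} and collect coefficients of $K(1/\sqrt{3})$, $F(\alpha, 1/\sqrt{3})$ and $\Pi(\pi/2, k, 1/\sqrt{3})$ separately. The critical cancellation is in the $K$-coefficient:
\[
\sqrt{3}\cdot\frac{3+\sqrt{3}}{3} - 3\cdot\frac{1+\sqrt{3}}{3} = (1+\sqrt{3}) - (1+\sqrt{3}) = 0.
\]
The $F$-coefficient reduces to $(\sqrt{3}-3)(3+\sqrt{3})/3 = -2$, and the $\Pi$-coefficient collapses to $-3\cdot\bigl(-2(\sqrt{3}-1)/3\bigr) = 2(\sqrt{3}-1)$. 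Dividing the resulting identity by $2$ gives exactly \eqref{E:final}.

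The only real obstacle is algebraic bookkeeping, and the cancellation of $K(1/\sqrt{3})$ is the single nontrivial step. That the coefficients $\sqrt{3}$, $\sqrt{3}-3$ and $-3$ in \eqref{E:threeintegrals} conspire to eliminate the complete elliptic integral term is, in a sense, the payoff for the careful construction in all the preceding sections; once that cancellation is confirmed, nothing else is needed and the theorem follows immediately.
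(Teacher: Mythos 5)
Your proposal is correct and follows the same overall strategy as the paper: substitute the evaluations of Proposition \ref{P:valuesInt} into \eqref{E:threeintegrals}, observe that the complete elliptic integral drops out, and divide by $2$. The one place you genuinely diverge is the treatment of $\int_1^{1/k}dx/\sqrt{\Delta}$. The paper identifies this integral with $K(k')$, $k'^2=1-k^2$ (citing Whittaker--Watson), and then needs the descending Landen transformation to convert $\sqrt{3}\,K(k')$ into $(1+\sqrt{3})K(3^{-1/2})$ before the cancellation against the $-3\cdot\frac{1+\sqrt{3}}{3}K(3^{-1/2})$ term can occur. You instead evaluate the integral directly in the modulus $1/\sqrt{3}$ by taking the upper limit in the Byrd--Friedman reduction \textbf{256.00} all the way to $1/k$, where the amplitude becomes $\pi/2$, giving $\frac{3+\sqrt{3}}{3}K(1/\sqrt{3})$ at once; as you note, this is equivalent to the Landen step, but it makes the $K$-cancellation immediate. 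One small bookkeeping caveat: what you call a ``specialization of the first identity of Proposition \ref{P:valuesInt}'' is not literally a specialization of that statement, which fixes the upper limit $a=(1+\sqrt{3})/2$; it is a specialization of the general formula \textbf{256.00} used in its proof, so strictly you are re-running that reduction with a different endpoint rather than citing the proposition as stated. Your coefficient computations (the $K$-coefficient $\sqrt{3}\cdot\frac{3+\sqrt{3}}{3}-3\cdot\frac{1+\sqrt{3}}{3}=0$, the $F$-coefficient $-2$, the $\Pi$-coefficient $2(\sqrt{3}-1)$) are all correct, so \eqref{E:final} follows. Your variant buys a slightly more self-contained argument (no appeal to Landen or to the classical identity $\int_1^{1/k}dx/\sqrt{\Delta}=K'$), at the cost of leaning once more on the tables; the paper's route makes that classical identity explicit, which is arguably more transparent.
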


\begin{proof}
Substituting in \eqref{E:threeintegrals} the values given in Proposition \ref{P:valuesInt}
yields 
\[
2\sqrt{2}I=\sqrt{3}K(k')-2F(\alpha,3^{-1/2})-(1+\sqrt{3})K(3^{-1/2})+2(\sqrt{3}-1)
\Pi(\tfrac{\pi}{2}, k, 3^{-1/2}),
\]

But by the descending Landen transformation \href{http://dlmf.nist.gov/19.8.E12}{\cite{DLMF}*{\textbf{19.8.12}}}
\[K(\sqrt{1-k^2})=\frac{2}{1+k}K\Bigl(\frac{1-k}{1+k}\Bigr),\]
with $k=2-\sqrt{3}$ yields 
\[\sqrt{3}K(k')=(1+\sqrt{3})K(3^{-1/2}).\]
Therefore two terms cancel in our equation and \eqref{E:final} follows.
\end{proof}

\end{document}